\title{Ultrafilters on singular cardinals of uncountable cofinality}
\author{James Cummings}
\address{Department of Mathematical Sciences\\Carnegie Mellon University\\Pittsburgh PA 15213-3890\\USA}
\email{jcumming@andrew.cmu.edu}
\thanks{Cummings was partially supported by the National Science Foundation, DMS-1500790.}
\author{Charles Morgan}
\email{charles\_jg\_morgan@email.com}
\DeclareMathOperator{\dom}{dom}
\DeclareMathOperator{\rge}{rge}
\DeclareMathOperator{\lh}{lh}
\DeclareMathOperator{\cf}{cf}
\DeclareMathOperator{\Ch}{Ch}
\DeclareMathOperator{\Spch}{Sp_\chi}
\DeclareMathOperator{\crit}{crit}
\DeclareMathOperator{\forces}{\Vdash}
\DeclareMathOperator{\OB}{OB}
\DeclareMathOperator{\Ult}{Ult}
\newtheorem{definition}{Definition}[section]
\newtheorem{lemma}[definition]{Lemma}
\newtheorem{claim}[definition]{Claim}
\newtheorem{subclaim}[definition]{Subclaim} 
\newtheorem{theorem}{Theorem} 
\newtheorem{fact}[definition]{Fact}
\theoremstyle{definition}
  \newtheorem{remark}[definition]{Remark}
  \newtheorem*{notation}{Notation} 
\subjclass[2010]{Primary 03E35; Secondary 03E55}
\keywords{Singular cardinal, uniform ultrafilter, extender-based Radin forcing}
\begin{document}

\begin{abstract} We prove that consistently there is a singular cardinal $\kappa$ of
  uncountable cofinality such that $2^\kappa$ is weakly inaccessible, and every
  regular cardinal strictly between $\kappa$ and $2^\kappa$ is the character of some
  uniform ultrafilter on $\kappa$.
\end{abstract}

\maketitle

\section{Introduction}

The {\em cardinal invariants of the continuum} are a family of cardinal numbers which
measure structural properties of the continuum.
Many of them are defined
from ${\omega}^\omega$ with the eventual domination ordering $\le^*$, $[\omega]^\omega$ with the almost inclusion
ordering $\subseteq^*$ or $\mathbb R$ with the null and meagre ideals,

Well known examples include:
\begin{itemize}
\item  $\mathfrak b$, the least size of an unbounded subset of $(\omega^\omega, \le^*)$.
\item  $\mathfrak d$, the least size of a cofinal subset of $(\omega^\omega, \le^*)$.
\item  $\mathfrak s$, the least size of a {\em splitting family} in $[\omega]^\omega$, that is a family
  $S$ such that for every $A \in [\omega]^\omega$ there is $B \in S$ with $A \cap B$ and $A \setminus B$ both
   infinite. 
 \item  $\mathfrak u$, the least size of a family in $[\omega]^\omega$ that generates a non-principal ultrafilter.
\end{itemize} 

 Assuming CH makes every reasonable cardinal invariant take the value $\omega_1$, while assuming MA makes every
 reasonable cardinal invariant take the value $2^{\aleph_0}$. A substantial research program in the set theory
 of the continuum has been to prove ZFC results which constrain the values of one or more cardinal invariants,
 either absolutely or in terms of other cardinal invariants, and
 complementary consistency results.

 One natural direction for generalisation is to replace $\omega$ by an uncountable regular cardinal $\kappa$.
 Some results generalise readily but new phenomena occur: notably the value of $\kappa^{<\kappa}$ is sometimes
 important, cardinal invariants associated with $\kappa$ and $\kappa^+$ can interact, and while the generalised
 invariants are typically defined using the co-bounded filter on $\kappa$ the club filter also plays a major role.

 We can also replace $\omega$ by a singular cardinal $\kappa$. Various issues arise here which are not present
 for regular $\kappa$: in general $2^{<\kappa}$ and $\kappa^{<\kappa}$ may not be equal, it's always true that
 $\kappa^{<\kappa} > \kappa$, the cobounded and club filters are only $\cf(\kappa)$-complete and the
 eventual domination and eventual inclusion orderings are less well-behaved. Nevertheless, Zapletal \cite{Zapletal} proved
 interesting results about the invariant ${\mathfrak s}(\kappa)$ in this setting.

 When $\kappa$ is an uncountable cardinal, the {correct}
 generalisation ${\mathfrak u}(\kappa)$ of the cardinal
 invariant $\mathfrak u$ involves {\em uniform} ultrafilters on $\kappa$, since a non-uniform ultrafilter on $\kappa$ is
 morally an ultrafilter on a smaller cardinal.
 \begin{definition}
   Let $\kappa$ be an infinite cardinal.
\begin{itemize}   
\item If $U$ is a uniform ultrafilter on $\kappa$, then a {\em base for $U$} 
  is a set $U' \subseteq U$ such that for every $X \in U$ there is $Y \in U'$ with
  $Y \subseteq^* X$.
\item The {\em character} $\Ch(U)$ of a uniform ultrafilter $U$ on $\kappa$ is the least
  size of a base for $U$.
\item  The {\em character spectrum} $\Spch(\kappa)$ of $\kappa$ is the set of characters
  of uniform ultrafilters on $\kappa$.
\item  ${\mathfrak u}(\kappa)$ is the minimum element of  $\Spch(\kappa)$. 
\end{itemize}
\end{definition} 

 It is not hard to see that $\kappa < {\mathfrak u}(\kappa) \le 2^\kappa$, so that for
 $\kappa$ singular and strong limit we can only obtain models with ${\mathfrak u}(\kappa) < 2^\kappa$
 by violating the Singular Cardinals Hypothesis. 

 There are several results about ${\mathfrak u}(\kappa)$ and $\Spch(\kappa)$ when $\kappa$ is singular strong limit
 in the literature:
\begin{itemize}
\item (Garti and Shelah \cite[Corollary 1.5]{GartiShelah}) Let $\kappa$ be supercompact, let GCH hold and let $\lambda < \kappa$ be regular.
  Then there are cardinal-preserving generic extensions in which $\cf(\kappa) = \lambda$, $2^\kappa$ is arbitrarily
  large and ${\mathfrak u}(\kappa) = \kappa^+$.
\item (Garti, Magidor and Shelah \cite[Theorem 9]{GartiMagidorShelah})
  Let $\kappa$ be strong,\footnote{The authors say supercompact but their proof uses only that $\kappa$ is strong
    with  measurable cardinals above it.}
  let GCH hold
  and let $\langle \mu_i : i < j \rangle$ be an increasing sequence
  of measurable cardinals above $\kappa$. Let $\langle \chi_i : i < j \rangle$ be an increasing sequence of
  regular cardinals above $\kappa$ with $\chi_i \le \mu_i < \chi_{i+1}$. Then there is a generic extension
  in which $\kappa$ is a singular strong limit cardinal of cofinality $\omega$, the cardinals
  $\chi_i$ remain regular, and $\{ \chi_i : i < j \} \subseteq \Spch(\kappa)$.
\item (Garti, Gitik and Shelah \cite{GartiGitikShelah}) It is consistent that $u_{\aleph_\omega} < 2^{\aleph_\omega}$
  with $\aleph_\omega$ strong limit.
\item (Gitik \cite{Gitik1})  It is consistent that a uniform ultrafilter over a singular cardinal can have singular character.
\end{itemize}
Gitik \cite{Gitik2, Gitik3} has also proved a number of interesting results about the related
 notion of ``strongly uniform ultrafilter'' and the related invariants. 

In this paper we extend the results of \cite{GartiMagidorShelah} to the situation where
the singular cardinal $\kappa$ has uncountable cofinality (see Theorems \ref{mainthm} and
 \ref{mainthm2} in Section \ref{mainsectionthm}).  
Our main  tool is a variant of Merimovich's
``extender-based Magidor-Radin forcing'' \cite{Carmi2011}, which has been modified 
 to exert  finer control over the cardinal arithmetic and PCF structure of the
 generic extension. 

 A key point is to construct certain PCF-theoretic scales in the extension,
 defined on reduced products of measurable cardinals where GCH holds and the
 corresponding reduced products of their successors. The arguments are somewhat
 parallel to those from \cite{GartiMagidorShelah} but there are new difficulties,
 in particular:
\begin{itemize} 
\item   By Silver's theorem and the subsequent work of Galvin and Hajnal, a severe failure  
  of GCH at a singular strong limit cardinal $\kappa$ of uncountable cofinality implies
  severe failure of GCH at almost every smaller cardinal. This is reflected in the structure
  of the forcing, and makes it harder to find  suitable sets of cardinals on which to define
  our scales. 
\item  The arguments of \cite{GartiMagidorShelah} use an extender-based forcing built from
   a single extender, and hinge on some analysis of the PCF structure in the corresponding extension
   due to Merimovich \cite{Carmi2007}. The PCF analysis is substantially harder 
   for us: there are many extenders involved, and there are 
   various difficulties whose root cause is that the forcing  conditions  are much more 
   complex objects than in the one-extender case. 
\end{itemize} 
 See the beginning of Section \ref{scales}
 for a more detailed discussion of the issues that arise in the scale construction.

Here is an outline of the rest of the paper:
\begin{itemize} 
\item In Section \ref{genufsection}, we review the construction of uniform ultrafilters
   with specified characters from appropriate scales.  
\item In Section \ref{backgroundsection}, we give some background on Radin forcing and
 the one-extender form of extender-based forcing, intended to motivate the extender-based
 Radin forcing of the following section. 
\item In Section \ref{EBF} we define a version of extender-based Radin forcing, and discuss
 its basic properties and its relationship with the construction of \cite{Carmi2011}. 
\item In Section \ref{itns}, we construct some finite iterated ultrapowers involving extenders,
 which will be useful in the scale analysis of the following section.  
\item In Section \ref{scales}, we construct a family of scales in the generic extension 
  by the forcing from Section \ref{EBF}. 
\item In Section \ref{mainsectionthm}, we state and prove our main results, Theorems
 \ref{mainthm} and \ref{mainthm2}.
\end{itemize}

\begin{notation}
  Our notation is mostly standard. The arguments involve various manipulations with sequences, and we use the following conventions:
\begin{itemize}
\item Sequences are generally written with either a bar or an arrow over them, for example $\bar u$ or $\vec \nu$. 
\item The concatenation of two sequences $\vec\sigma$ and $\vec\tau$ is written $\vec\sigma^\frown \vec\tau$. The result of prepending (resp.~appending) 
 an object $x$ to $\vec\sigma$ is $\langle x \rangle^\frown \vec\sigma$ (resp.~$\vec\sigma^\frown \langle x \rangle$). 
\item If $\vec \sigma$ is a sequence and $i \le \lh(\vec \sigma)$, then $\vec \sigma \restriction i$ is the restricted sequence
  $\langle \sigma_j : j < i \rangle$.
\item If $\vec \nu$ is a sequence of (possibly partial) functions on some domain $D$, and $d \subseteq D$, then
$\vec \nu \restriction d$ is the sequence of restricted functions $\langle \nu_i \restriction d : i < \lh(\vec \nu) \rangle$.
 In principle this could clash with the notation ``$\vec \sigma \restriction i$'' as above, but this will not happen here.
\item  Restriction has a higher precedence than concatenation, so that for example
 $\langle \kappa \rangle^\frown \vec U \restriction i$ is the concatenation of the sequences $\langle \kappa \rangle$ and
  $\vec U \restriction i$. 
\end{itemize} 
\end{notation}

\section{Generating ultrafilters} \label{genufsection}

We need some machinery for generating ultrafilters on singular cardinals. We use results from
\cite{GartiShelah} and \cite{GartiMagidorShelah},
which we sketch here to make this paper more self-contained.

\begin{definition} Let $\kappa$ be a regular cardinal, and let $U$ be a uniform ultrafilter $U$.
  An {\em almost-decreasing generating sequence for $U$} is a  $\subseteq^*$-decreasing sequence
  $\langle A_i : i < \theta \rangle$ such that $\{ A_i : i < \theta \}$ forms a base for $U$.
\end{definition}

It is easy to see that:
\begin{itemize}
\item  If $U$ has an almost-decreasing generating sequence, then it has such a sequence
  $\langle A_i : i < \theta \rangle$ such that  $\theta = \cf(\theta) > \kappa$. Moreover, in this situation $\theta = \Ch(U)$. 
\item  If $U$ has an almost-decreasing generating sequence then $U$ is $\kappa$-complete, in particular
  $\kappa$ is a measurable cardinal. 
\item  If $\kappa$ is measurable, $2^\kappa = \kappa^+$ and $U$ is a normal measure on $\kappa$, then
  $U$ has an almost decreasing generating sequence of length $\kappa^+$.
\end{itemize}

\begin{remark} It is possible to produce measures on $\kappa$ with almost decreasing generating
 sequences of a prescribed length. The basic idea is to start with $\kappa$ which is indestructibly
 supercompact and regular $\theta > \kappa^+$, iterate the ``long Prikry forcing'' (also known as
 ``long Mathias forcing'') at $\kappa$ for $\theta$ steps,
 and then do a delicate argument to produce a measure $U$ such that for many $i < \theta$
 we have that $U \cap V[G_i]$ is the measure which was used at stage $i$ and the generic subset
 added at stage $i$ is in $U$. See for example \cite{DzamonjaShelah} or \cite{5authors} for constructions of this type:
 the posets iterated in these papers are elaborations of long Prikry forcing, but the arguments
 work equally well for iterating long Prikry forcing.
\end{remark}

We recall the concept of a {\em scale} from PCF theory. We only need this concept in its simplest form.
\begin{definition} Let $\lambda$ be a singular cardinal with $\cf(\lambda) = \tau$, and let
  $\langle \lambda_i : i < \tau \rangle$ be an increasing sequence of regular cardinals which
  is cofinal in $\lambda$. A {\em scale of length $\nu$ in $\prod_{i < \tau} \lambda_i$}
  is a sequence $\langle g_\eta : \eta < \nu \rangle$ of functions 
  which is increasing and cofinal in $(\prod_{i < \tau} \lambda_i, <^*)$, where $<^*$
 is the eventual domination ordering.
\end{definition}
  
\begin{remark}  Typically the length $\nu$ of a scale as above is a regular cardinal,
  and this will always be the case in this paper. It is easy to see that $\lambda < \nu \le 2^\lambda$.
\end{remark}

The following result is a very mild generalisation of  \cite[Claim 4]{GartiMagidorShelah}
 and \cite[Theorem 1.4]{GartiShelah}
 \begin{lemma} \label{scalegeneration} 
   Suppose $\kappa$ is a singular cardinal such that $\cf(\kappa) = \rho$ and
   $2^\rho \le \kappa$. Let $\langle \mu_i : i < \rho \rangle$ be an increasing and cofinal sequence
   in $\kappa$ such that each $\mu_i$ is measurable and carries a measure $U_i$ which is
   generated by an almost decreasing sequence of regular length $\theta_i$. Let
   $\langle f_\alpha : \alpha < \sigma \rangle$ be a scale in $\prod_{i < \rho} \mu_i$
    with $\sigma$ regular and $\kappa < \sigma$, and let $\langle g_\beta : \beta < \tau \rangle$ be a scale  
   in $\prod_{i < \rho} \theta_i$ with $\tau$ regular and $\sigma \le \tau$.

   Then there exists a uniform ultrafilter $U$ on $\kappa$ such that $\Ch(U) = \tau$.
\end{lemma}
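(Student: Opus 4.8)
The plan is to build $U$ directly from the $U_i$ together with an auxiliary uniform ultrafilter on $\rho$, and then read off its character from the two scales. First, some normalisations. Since $U_i$ is a nonprincipal $\mu_i$-complete ultrafilter on $\mu_i$, every member of $U_i$, in particular every $A^i_\xi$, has cardinality $\mu_i$; and, since $\theta_i = \Ch(U_i)$ is regular, after replacing the almost decreasing generating sequence $\langle A^i_\xi : \xi < \theta_i \rangle$ for $U_i$ by a cofinal subsequence we may assume it is \emph{strictly} $\subseteq^*$-decreasing, so that $A^i_\xi \setminus A^i_\eta$ is unbounded in $\mu_i$, hence of size $\mu_i$, whenever $\xi < \eta < \theta_i$. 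Identify $\kappa$ with the disjoint union $\bigsqcup_{i<\rho}\mu_i$ (legitimate since $\sum_{i<\rho}\mu_i = \kappa$), write $X_i \subseteq \mu_i$ for the $i$-th slice of $X \subseteq \kappa$, and set $Z(X) = \{\, i < \rho : X_i \in U_i \,\}$. Fix \emph{any} uniform ultrafilter $V$ on $\rho$; note that $\Ch(V) \le 2^\rho \le \kappa < \tau$. Now let
\[ U = \{\, X \subseteq \kappa : Z(X) \in V \,\}. \]
Since each $U_i$ and $V$ are ultrafilters, $X \mapsto Z(X)$ is a Boolean homomorphism from $\mathcal{P}(\kappa)$ to $\mathcal{P}(\rho)$, so $U$ is an ultrafilter; and $U$ is uniform, since for $X \in U$ the set $Z(X) \in V$ is cofinal in $\rho$ ($V$ uniform, $\rho$ regular) while $|X_i| = \mu_i$ for $i \in Z(X)$, whence $|X| = \kappa$.

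For the upper bound $\Ch(U) \le \tau$, I propose the candidate base
\[ \mathcal{B} = \Bigl\{\, B_{W,\alpha,\beta} := \textstyle\bigsqcup_{i \in W}\bigl(A^i_{g_\beta(i)} \setminus f_\alpha(i)\bigr) \;:\; W \in \mathcal{B}_V,\ \alpha < \sigma,\ \beta < \tau \,\Bigr\}, \]
where $\mathcal{B}_V$ is a base for $V$ of size $\le 2^\rho$. Each $B_{W,\alpha,\beta}$ lies in $U$, because $A^i_{g_\beta(i)} \setminus f_\alpha(i) \in U_i$ (as $U_i$ is $\mu_i$-complete and $f_\alpha(i) < \mu_i$) for every $i \in W \in V$; and $|\mathcal{B}| \le |\mathcal{B}_V| \cdot \sigma \cdot \tau = \tau$, using $|\mathcal{B}_V| \le 2^\rho \le \kappa < \sigma \le \tau$. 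To see that $\mathcal{B}$ is a base, given $X \in U$ choose $W \in \mathcal{B}_V$ almost contained in $Z(X)$; for $i \in Z(X)$ pick $\eta_i < \theta_i$ with $A^i_{\eta_i} \subseteq^* X_i$ and let $\gamma^0_i < \mu_i$ bound $A^i_{\eta_i} \setminus X_i$; using that $\langle g_\beta \rangle$ is cofinal in $(\prod_i \theta_i, <^*)$ find $\beta$ with $g_\beta(i) \ge \eta_i$ for eventually all $i$, and let $\gamma^1_i < \mu_i$ bound $A^i_{g_\beta(i)} \setminus A^i_{\eta_i}$; and finally, using that $\langle f_\alpha \rangle$ is cofinal in $(\prod_i \mu_i, <^*)$, find $\alpha$ with $f_\alpha(i) \ge \max(\gamma^0_i, \gamma^1_i)$ for eventually all $i$. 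Then for all but boundedly many $i \in W$ one has $A^i_{g_\beta(i)} \setminus f_\alpha(i) \subseteq X_i$, so $B_{W,\alpha,\beta} \setminus X \subseteq \bigsqcup_{i < i^*}\mu_i$ for some $i^* < \rho$, a set of cardinality $< \kappa$; hence $B_{W,\alpha,\beta} \subseteq^* X$. I expect this verification to be the crux of the argument: the slices $A^i_{g_\beta(i)}$ approximate $X_i$ only modulo a set bounded in $\mu_i$, and a priori these bounded errors accumulate over cofinally many $i < \rho$ into a subset of $\kappa$ of full cardinality $\kappa$; the scale $\langle f_\alpha \rangle$ in $\prod_i \mu_i$ is present precisely to absorb them uniformly, which is why both scales (not just the one in $\prod_i \theta_i$) enter the hypotheses.

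For the lower bound $\Ch(U) \ge \tau$, suppose towards a contradiction that $U'$ is a base for $U$ with $|U'| < \tau$. For $Y \in U'$ define $s_Y \in \prod_i \theta_i$ by letting $s_Y(i)$ be least with $A^i_{s_Y(i)} \subseteq^* Y_i$ for $i \in Z(Y)$, and $s_Y(i) = 0$ otherwise. As $|U'| < \tau = \cf(\tau)$ and $\langle g_\beta : \beta < \tau \rangle$ is increasing and cofinal in $(\prod_i \theta_i, <^*)$, there is $\beta^* < \tau$ with $g_{\beta^*} \ge^* s_Y$ for every $Y \in U'$. Consider $B := \bigsqcup_{i<\rho} A^i_{g_{\beta^*}(i)+1}$, which lies in $U$ (as $Z(B) = \rho$). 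No $Y \in U'$ satisfies $Y \subseteq^* B$: fixing $Y$, for all large $i$ in the cofinal set $Z(Y)$ we have $s_Y(i) \le g_{\beta^*}(i) < g_{\beta^*}(i)+1$, so by strictness $A^i_{s_Y(i)} \setminus A^i_{g_{\beta^*}(i)+1}$ has cardinality $\mu_i$, and removing the bounded error between $A^i_{s_Y(i)}$ and $Y_i$ still leaves a subset of $Y_i \setminus B_i$ of cardinality $\mu_i$; summing along $Z(Y)$ gives $|Y \setminus B| = \kappa$, that is, $Y \not\subseteq^* B$, contradicting that $U'$ is a base and $B \in U$. Hence $\Ch(U) \ge \tau$, and with the previous paragraph $\Ch(U) = \tau$, as required.
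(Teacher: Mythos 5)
Your proof is correct, and your ultrafilter is the paper's ultrafilter in disguise: taking $U = \{X : \{i : X_i \in U_i\} \in V\}$ is equivalent to the paper's definition of $U$ as the filter generated by the sets $Y_{X,\alpha,\beta} = \bigcup_{i\in X}(A^i_{g_\beta(i)}\setminus f_\alpha(i))$, and your verification that $\mathcal{B}$ is a base (using both scales to absorb the bounded errors slice by slice) is essentially the paper's proof that these sets generate an ultrafilter -- the paper even arranges genuine inclusion $Y_{X,\alpha,\beta}\subseteq Y$ rather than $\subseteq^*$. Where you genuinely diverge is the lower bound $\Ch(U)\ge\tau$. The paper fixes $X\in E$ and $\alpha<\sigma$, pigeonholes over the purported small base to find one $Y'\in U'$ sitting $\subseteq^*$-below $Y_{X,\alpha,\beta}$ for unboundedly many $\beta$, and then, via auxiliary sets $D_i\in U_i$ with $A^i_{g_{\beta_0}(i)}\setminus D_i$ unbounded, produces a single point $\delta$ yielding a contradiction. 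You instead dominate all the trace functions $s_Y$ ($Y\in U'$) by a single $g_{\beta^*}$ and exhibit one set $B=\bigsqcup_i A^i_{g_{\beta^*}(i)+1}\in U$ that almost-contains no member of $U'$; this is cleaner, and it makes visible that the lower bound uses only the scale in $\prod_i\theta_i$, the $f$-scale being needed only for the upper bound. The price is your normalisation that the generating sequences be strictly $\subseteq^*$-decreasing: this is legitimate, but your stated reason ($\theta_i=\Ch(U_i)$) is not a hypothesis of the lemma. The correct justification is that $U_i$, being nonprincipal and $\mu_i$-complete, cannot be generated mod bounded sets by a single set (split any candidate generator into two unbounded pieces), so the sequence is not eventually constant mod $=^*$; hence indices of strict shrinkage are cofinal in $\theta_i$, and regularity of $\theta_i$ lets you extract a strictly decreasing cofinal subsequence still of length $\theta_i$ -- which is in effect what the paper's choice of the sets $D_i$ accomplishes on the fly.
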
 

\begin{proof} We may assume that $\rho < \mu_0$. 
Let $\mu_i^* = \sup_{i' < i} \mu_{i'}$ for $i < \rho$, and note that
$\mu_i^* < \mu_i$ because $i < \rho < \mu_0 \le \mu_i$ {and $\mu_i$, being measurable, is regular}.
Then the sequence $\langle \mu_i^* : i < \rho \rangle$ is continuous, increasing
and cofinal in $\rho$. Clearly $\mu_0^* = 0$ and $\mu_{i+1}^* = \mu_i$,
so the cardinal $\kappa$ is the union of pairwise disjoint non-empty intervals of the form
$[\mu_i^*, \mu_i)$ for $i < \rho$. 

For each $i < \rho$, fix an almost decreasing generating  sequence $\langle A^i_\eta: \eta < \theta_i \rangle$ 
{for $U_i$} such  that $A^i_\eta \subseteq [\mu_i^*, \mu_i)$ for all $\eta < \theta_i$.     
Fix a uniform ultrafilter $E$ on $\rho$. We use the data $\vec f$, $\vec g$, $\vec A$ and $E$ to define a
  uniform ultrafilter
  $U$ on $\kappa$ with a small generating set.

  Given $X \in E$, $\alpha < \sigma$ and $\beta < \tau$, let
  \[
  Y_{X, \alpha, \beta} = \bigcup_{i \in X} (A^i_{g_{\beta}(i)} \setminus f_{\alpha}(i)).
  \]
  Note that $Y_{X,\alpha,\beta} \cap [\mu_i^*, \mu_i) = A^i_{g_{\beta}(i)} \setminus f_{\alpha}(i)$
    for $i \in X$ and $Y_{X,\alpha,\beta} \cap [\mu_i^*, \mu_i) = \emptyset$ for $i \notin X$.

 \begin{claim}
   The sets $Y_{X,\alpha,\beta}$ form a filter base of size $\tau$,  which generates a uniform filter. 
 \end{claim}
 
 \begin{proof}
   Since $2^\rho \le \sigma \le \tau$, it is immediate that $\vert E \times \sigma \times \tau \vert = \tau$. 
   
   Let $n < \omega$, let $\gamma < \kappa$ and let $(X_k, \alpha_k, \beta_k)
   \in E \times \sigma \times \tau$ for $k  < n$.  Let
   $i \in \bigcap_{k < n} X_k$ with $\mu_i > \gamma$. Note that each
   of the sets $A^i_{g_{\beta_k}(i)} \setminus f_{\alpha_k}(i)$ is in $U_i$,
   and choose $\eta$ in their intersection with $\eta > \gamma$. Then
   clearly $\eta \in \bigcap_{k < n} Y_{X_k, \alpha_k, \beta_k}$.
 \end{proof}

 \begin{claim} 
   The sets $Y_{X,\alpha,\beta}$ generate an ultrafilter.
 \end{claim}
 
 \begin{proof} Let $Y \subseteq \kappa$. Either $\{ i : Y \cap \mu_i \in U_i \} \in E$ or 
   $\{ i : Y^c \cap \mu_i \in U_i \} \in E$, so replacing $Y$ by $Y^c$ if necessary we may assume that
   $X_0 \in E$, where $X_0 = \{ i : Y \cap \mu_i \in U_i \}$. For each $i \in X_0$, let $g(i) < \theta_i$
   be such that $A^i_{g(i)} \subseteq^* Y \cap \mu_i$, and note that
   $A^i_\eta \subseteq^* Y \cap \mu_i$ for all $\eta \ge g(i)$. Since 
   $\langle g_{\beta} : \beta < \tau \rangle$ is a scale, there exist $\beta < \tau$ and $i_0 < \rho$  such that
   $g(i) < g_{\beta}(i)$ for all $i \in X_0 \setminus i_0$.

   Let $X_1 = X_0 \setminus i_0$. For every $i \in X_1$ we have
   $A^i_{g_{\beta}(i)} \subseteq^* A^i_{g(i)} \subseteq^* Y \cap \mu_i$,
   so we may choose $f(i) < \mu_i$ such that $A^i_{g_{\beta}(i)}
   \setminus f(i) \subseteq Y \cap \mu_i$.  Since $\langle f_\alpha : \alpha <
   \sigma \rangle$ is a scale, there exist $\alpha < \sigma$ and $i_1$ with
   $i_0 < i_1 < \rho$ such that $f(i) < f_\alpha(i)$ for all $i \in X_1
   \setminus i_1$. If we let $X = X_1 \setminus i_1$ then $X \in E$
   and $A^i_{g_{\beta}(i)} \setminus f_\alpha(i) \subseteq Y \cap {\mu_i}$
   for all $i \in X$, so by definition $Y_{X,\alpha,\beta} \subseteq Y$.
\end{proof}  

  Let $U$ be the ultrafilter generated by the sets $Y_{X,\alpha,\beta}$.  
  From the proof of the last claim, we see that $Y \in U$ if and only if 
  $Y_{X,\alpha,\beta} \subseteq Y$ for some $X, \alpha, \beta$; the proof also shows that
  $\alpha$ and $\beta$ may be chosen arbitrarily large.

 \begin{claim}
   The character of $U$ is exactly $\tau$. 
 \end{claim}

 \begin{proof}
   Suppose for a contradiction that $U'$ is a base for $U$ with $\vert U' \vert < \tau$.
   Fix $X \in E$ and $\alpha < \sigma$, and find $Y' \in U'$ and $i_0 < \rho$ such that
   $Y' \setminus \mu_{i_0} \subseteq Y_{X,\alpha,\beta}$ for unboundedly many $\beta < \tau$. Find
   $X_0, \alpha_0, \beta_0$ such that $Y_{X_0, \alpha_0, \beta_0} \subseteq Y' \setminus \mu_{i_0}$, so that
   $Y_{X_0, \alpha_0, \beta_0} \subseteq Y_{X,\alpha,\beta}$ for unboundedly many $\beta < \tau$.

   For all $i$ with $i_0 < i < \rho$, find $D_i \subseteq A^i_{g_{\beta_0}(i)}$ such that
   $D_i \in U_i$ and $A^i_{g_{\beta_0}(i)} \setminus D_i$ is unbounded, and then $g(i) > g_{\beta_0}(i)$
   such that $A^i_{g(i)} \subseteq^* D_i$; note that if $\eta > g(i)$ then
   $A^i_\eta \subseteq^* D_i$, so that $A^i_{g_{\beta_0}(i)} \setminus A^i_\eta$ is unbounded.
   
   Choose $\beta > \beta_0$ such that $g <^* g_{\beta}$ and $Y_{X_0, \alpha_0, \beta_0} \subseteq Y_{X,\alpha,\beta}$.
   Choose $i \in X_0 \cap X$ such that $g(i) < g_{\beta}(i)$. Finally choose
   $\delta \in A^i_{g_{\beta_0}(i)} \setminus A^i_{g_{\beta}(i)}$ such that  $\delta > f_{\alpha_0}(i), f_\alpha(i)$.
   $Y_{X_0, \alpha_0, \beta_0} \cap [\mu_i^*, \mu_i) = A^i_{g_{\beta_0}(i)} \setminus f_{\alpha_0}(i)$ and
     similarly 
   $Y_{X_0, \alpha, \beta} \cap [\mu_i^*, \mu_i) = A^i_{g_{\beta}(i)} \setminus f_{\alpha}(i)$.
   So  $\delta \in Y_{X_0, \alpha_0, \beta_0}$ and $\delta \notin Y_{X,\alpha,\beta}$, contradicting the
   choice of $\beta$. 
\end{proof} 

   This concludes the proof of Lemma \ref{scalegeneration}.
 
\end{proof}

\section{Some background for the main construction} \label{backgroundsection}

As we mentioned in the introduction, the proof of our main result uses a form of extender-based Radin forcing.
In this short section we describe a simple form of Radin forcing (due in this version to Mitchell \cite{Mitchell}, building on
work of Radin \cite{Radin})
and a simple form of extender-based forcing (due in this version to Gitik and Merimovich \cite[Section 3]{CarmiMoti},
building on work
of Gitik and Magidor \cite{GitikMagidor}). The intention is to help the reader who is less familiar with this type of
forcing construction to see the wood for the trees in the construction of Section \ref{EBF}. We encourage the expert reader
to skip this section and go straight to Section \ref{EBF}. 

\subsection{Radin forcing}

Let $\kappa$ be a measurable cardinal, let $\rho < \kappa$ be regular and uncountable
and let $\vec U = \langle U_i : i < \rho \rangle$ be a sequence of normal measures on $\kappa$ which is
{\em Mitchell increasing},
 that is $\langle U_i : i < j \rangle \in \Ult(V, U_j)$ for all $j < \rho$.
Let $\bar u = \langle \kappa \rangle^\frown \vec U$. For each $i < \rho$ define a
measure $W_i = \{  X  : \langle \kappa \rangle^\frown \vec U \restriction i \in j_{U_i}(X) \}$,
and note that $W_i$ concentrates on sequences of the form
$\bar u' = \langle  \kappa' \rangle^\frown \langle U'_{i'} : i' < i \rangle$  
  where $\kappa' < \kappa$ and $\langle U'_{i'} : i' < i \rangle$  
  is a Mitchell increasing sequence of measures on $\kappa'$. Let $W = \bigcap_{i < \rho} W_i$,
  so that $W$ is a $\kappa$-complete filter.

  The associated Radin forcing has conditions of the form
  \[
  \langle  (\bar u^0, A^0), \ldots (\bar u^n, A^n) \rangle
  \]
  where $\bar u^n = \bar u$ and $A^n \in W$. For $k < n$,
  $\bar u^k$ is a typical object for some measure $W_i$ of the sort described above.
  If $\lh(\bar u^k) = 1$ then $A^k = \emptyset$, otherwise $A^k$ is a large set for the
  filter derived from $\bar u^k$ in the same way that $W$ was derived from $\bar u$. 
  The sequence of cardinals $({\bar u}^k)_0$ is strictly increasing with $k$.

  A condition can be extended by performing a finite series of ``elementary'' extensions.
  One type of elementary extension is simply to shrink some $A^i$. The other
  is to interpolate a new pair $(\bar v, B)$ where for some $i$ we have
  $\bar v \in A_i$, $B \subseteq V_{v_0} \cap A_i$, and $(u_{i-1})_0 < v_0$ in the case when $i > 0$. 
  The construction of the filter derived from $\bar u^i$ when $\lh(\bar u^i) > 1$ assures that there is a large
  set of candidates for $v$. 

  The generic object for this forcing is a $\rho$-sequence $\langle {\bar u}(i) : i < \rho \rangle$
  where the sequence $\langle {\bar u}(i)_0 : i < \rho \rangle$ is increasing, continuous and cofinal in $\kappa$.
  A condition $\langle (\bar u^0, A^0), \ldots (\bar u^n, A^n) \rangle$ carries the information that $\bar u^i$
  must appear on the generic sequence, and that the remaining points on the generic sequence must be drawn from the appropriate
  large set $A^i$. The forcing is $\kappa^+$-cc and satisfies a version of the Prikry lemma, asserting that
  any question can be decided by shrinking large sets.  The forcing preserves cardinals but changes many cofinalities. 

  We note a point which is salient later for the forcing of Section \ref{EBF}. Having $n > 0$ and
  $\lh(\bar u^0) = 1 + \eta$, for some $\eta$ with $0 < \eta < \rho$, is not enough on its own to ensure that
  ${\bar u}^0$ appears as ${\bar u}(\omega^\eta)$ on the generic sequence:
  although the filter derived from $\bar u^0$ concentrates on shorter
  sequences, $A^0$ may contain sequences of length at least $1 +  \eta$.
  By shrinking $A^0$ to eliminate such sequences we may obtain
  a condition which forces ${\bar u}(\omega^\eta)$ to be ${\bar u}^0$.
   
\begin{remark} The forcing we described here is a very simple special case of Mitchell's forcing from \cite{Mitchell},
 which (in common with other forms of Radin forcing)  permits the defining sequence of measures to be much longer than
 the common critical point. 
\end{remark} 

  \subsection{Extender-based forcing with one extender} \label{one-extender}

  Let $j: V \rightarrow M$ be an embedding with $\crit(j) = \kappa$ and ${}^\kappa M \subseteq M$,
  and let $\lambda$ be a cardinal with $\kappa^+ \le \lambda < j(\kappa)$.

  Let $d \in [\kappa, \lambda)$
    with $\kappa \in d$  and $\vert d \vert \le \kappa$.
    Define 
    $E(d) = \{ X : (j \restriction d)^{-1} \in j(X) \}$. $E(d)$ is a measure and
    concentrates on the set of {\em $d$-objects},
    where a $d$-object is an order-preserving partial function $\nu$ from $d$ to $\kappa$ such that
    $\kappa \in \dom(\nu)$ and $\vert \dom(\nu) \vert \le \nu(\kappa) < \kappa$. 

    If $\mu$ and $\nu$ are $d$-objects then $\mu < \nu$ if and only if $\dom(\mu) \subseteq \dom(\nu)$
    and $\mu(\alpha) < \nu(\alpha)$ for all $\alpha \in \dom(\mu)$. A {\em $d$-tree} is
    a tree $T$ of finite increasing sequences of $d$-objects, such that for every node $\vec \mu \in T$ the
    set  $\{ \nu : {\vec \mu}^{{\hskip2pt}\frown} \langle \nu \rangle \in T \}$ is $E(d)$-large.   

    If $d \subseteq d'$ then it is easy to see that the map $\nu \mapsto \nu \restriction d$ is
    a map from the set of $d'$-objects to the set of $d$-objects, and projects $E(d')$ to $E(d)$. 
    If $T$ is a $d'$-tree then we abuse notation by writing
    $T \restriction d = \{ \vec \mu \restriction d : \vec \mu \in T \}$.

    Conditions in the associated \emph{extender based forcing} are pairs $(f, A)$ where
    $f$ is a function with $\dom(f) = d$ for some set $d$ as above, $f(\alpha)$ is a finite increasing sequence
    of elements of $\kappa$ for each $\alpha \in \dom(f)$, and $A$ is a $d$-tree.

  A condition can be extended by performing a finite series of ``elementary'' extensions.
  One type of elementary extension is to extend $(f, A)$ to $(f', A')$ where
  $f' \restriction \dom(f) = f$ and $A' \restriction \dom(f) \subseteq A$.
  The other is to choose some $\langle \nu \rangle \in A$ such that
  $f(\alpha)^\frown \langle \nu(\alpha) \rangle$ is increasing for all $\alpha \in \dom(\nu)$,
  replace $f(\alpha)$ by $f(\alpha)^\frown \langle \nu(\alpha) \rangle$ for $\alpha \in \dom(\nu)$,
  and replace $A$ by $A_{\langle \nu \rangle} = \{ \vec \mu : \langle \nu \rangle^\frown \vec \mu \in A \}$.

  The generic object for this forcing has the form $\langle f_\alpha: \kappa \le \alpha < \lambda \rangle$ 
  where each $f_\alpha$ is an increasing $\omega$-sequence and is cofinal in $\kappa$.
  The forcing is $\kappa^{++}$-cc and satisfies a version of the Prikry lemma, asserting that
  any question can be decided by forming an elementary extension of the first type decribed above.   The forcing adds no bounded subsets of $\kappa$,
  preserves all cardinals, and changes the cofinality of $\kappa$ to $\omega$.

  \begin{remark}  The \emph{extender based Radin forcing} which we describe in the next section is a
    common generalisation of
    the two forcings we have just described. It changes the cofinality of $\kappa$ to $\rho$ while adding
    $\lambda$ many cofinal $\rho$-sequences. This kind of result was first achieved by Segal \cite{Mirithesis},
    with an extender-based Magidor forcing.
    \end{remark} 

\begin{remark} 
  The forcing from Section \ref{one-extender} is closely related to a forcing of Gitik and Magidor \cite{GitikMagidor}.
  The main difference is that Gitik and Magidor's forcing is based on a Rudin-Keisler directed sequence of
  ultrafilters $\langle U_\nu : \kappa \le \nu < \lambda \rangle$, where $U_\nu = \{ X \subseteq \kappa : \nu \in j(X) \}$.
  A condition in their forcing is of the form $(f, A)$
  where $f$ is as above and $A$ is a tree of finite increasing sequences of elements of $\kappa$
  having $U_\nu$-large branching for a particular ``maximum coordinate'' $\nu = mc(\dom(f)) \in \dom(f)$; when
  $\langle \beta \rangle \in A$ is used to extend the condition, ``projected'' versions of $\beta$ are added at a
  certain set of fewer than $\kappa$ coordinates in $\dom(f)$. In the forcing we described here there
  is no need for the maximum coordinate $\nu$, instead each $d$-object chooses where its values are to be added,
  and the role of the maximum coordinate $\nu$ in generating a suitable measure is played by $(j \restriction d)^{-1}$.
 \end{remark}

\section{Extender-based Radin forcing} \label{EBF}

Let GCH hold. 
Let $\rho$, $\kappa$ and $\lambda$ be cardinals such that $\rho < \kappa < \lambda$ and:
\begin{enumerate}
\item  $\rho$ is regular and uncountable.
\item  $\lambda$ is an inaccessible limit of measurable cardinals, and is the least
  such cardinal greater than $\kappa$.
\item There exists a sequence of extenders
$\vec E = \langle E_i : i < \rho \rangle$ such that each $E_i$
  witnesses that $\kappa$ is $\lambda$-strong and has ${}^\kappa \Ult(V, E_i) \subseteq \Ult(V, E_i)$, and
 the sequence is {\em Mitchell increasing} in the sense that  $\langle E_i : i < j \rangle \in \Ult(V, E_j)$ for all $j < \rho$.
\end{enumerate}
  We note that it is straightforward to build a sequence $\vec E$ as above if $\kappa$ is $(\lambda+1)$-strong.

 We will describe an extender-based forcing which preserves all cardinals and forces that $\cf(\kappa) = \rho$ and
$2^\kappa = \lambda$. The key point will be that for every $V$-measurable cardinal $\mu$ with
$\kappa < \mu < \lambda$, the generic extension will contain scales that can be fed into the
machinery of Lemma \ref{scalegeneration} to produce a uniform ultrafilter on $\kappa$ with character $\mu^+$.   

Let $h: \kappa+1 \rightarrow \lambda+1$ be the function which maps $\alpha$
to the least inaccessible limit of measurable cardinals above
$\alpha$, and note that:
\begin{enumerate}
\item $h \restriction \kappa$ is a function from $\kappa$ to $\kappa$.
\item $h(\kappa) = \lambda$.
\item $j_E(h)(\kappa) = \lambda$ for any extender
  $E$ witnessing that $\kappa$ is $\lambda$-strong.
\end{enumerate}

We will use $\vec E$ to build a version $\mathbb P$ of the extender-based Radin
forcing ${\mathbb P}_{\vec E, \lambda}$ of Merimovich \cite{Carmi2011}.
For more details about the relationship between $\mathbb P$ and ${\mathbb P}_{\vec E, \lambda}$,
see Remark \ref{changes} at the end of this section.
Our forcing is
designed to exert finer control over cardinal arithmetic and scales in the generic extension.
We will use several ideas from \cite{Carmi2011}, in particular
\cite[Lemma 4.10]{Carmi2011} and \cite[Lemma 4.11]{Carmi2011}
afford an analysis of dense open sets which will play a critical role.

Here is an overview of the forcing $\mathbb P$.
\begin{itemize}
\item For each $\alpha$ with $\kappa \le \alpha < \lambda$, $\bar \alpha = \langle \alpha \rangle^\frown \vec E$. The intention is
  that $\bar\alpha$ will be a coordinate, to which the forcing will associate a certain $\rho$-sequence of elements of $V_\kappa$.
\item $\mathfrak D = \{ \bar \alpha : \kappa \le \alpha < \lambda \}$. To each non-empty $d \subseteq {\mathfrak D}$
  with $\vert d \vert \le \kappa$ and each $\xi < \rho$
  we associate a function $mc_\xi(d)$ with domain $j_{E_\xi}[d]$, defined by the equation
  $mc_\xi(d)(j_{E_\xi}(\bar\alpha)) = \langle \alpha \rangle^\frown \vec E \restriction \xi$.
  Since $mc_\xi(d) \in \Ult(V, E_\xi)$ by our hypotheses,
  we may define a measure $E_\xi(d) = \{ X : mc_\xi(d) \in j_{E_\xi}(X) \}$ and a filter $E(d) = \bigcap_{\xi < \rho} E_\xi(d)$.
\item  $E_\xi(d)$ concentrates on a set $\OB(d) \subseteq V_\kappa$ of {\em $d$-objects} which resemble $mc_\xi(d)$:
  in particular if $\nu$ is a $d$-object then $\bar \kappa \in \dom(\nu) \subseteq d$, $\nu(\bar\alpha)$ is a sequence
  consisting of an ordinal
  in the interval $[\nu(\bar\kappa)_0, h(\nu(\bar\kappa)_0))$
    followed by a Mitchell increasing sequence of extenders (which does not depend on $\bar\alpha$)  of some length $\xi < \rho$,
  each extender has critical point $\nu(\bar\kappa)_0$,
  $\vert \dom(\nu) \vert \le \nu(\bar \kappa)_0$, and $\nu$ is {\em order preserving} in the sense that
  if $\alpha < \beta$ then $\nu(\bar\alpha)_0 < \nu(\bar\beta)_0$.
\item   Merimovich \cite{Carmi2011}  uses the term {\em extender sequence} both for
  sequences consisting of extenders (such as $\vec E$), and for sequences consisting of an ordinal followed
  by a sequence of extenders (such as the values assumed by a $d$-object).
  To avoid any confusion we will reserve the term 
  {\em extender sequence} for sequences consisting of extenders, consistent with usage in
  inner model theory, and will use the term {\em tagged extender sequence} for sequences
  consisting of an ordinal followed by a sequence of extenders.
  Tagged extender sequences are ordered by comparing their initial entries.
  The {\em order}  $o(\vec e)$ of a extender sequence $\vec e$ is just its length,  
  the {\em order} $o(x)$ of a tagged extender sequence $x = \langle\tau\rangle^\frown \vec e$ is
  $o(\vec e)$, and the {\em order} $o(\mu)$ of a $d$-object $\mu$ is the order of the tagged extender sequence $\mu(\bar\kappa)$
  (which is also the order of $\mu(\bar\alpha)$ for all $\bar\alpha \in \dom(\mu)$).
\item  The $d$-objects are ordered by $\mu < \nu$ iff $\dom(\mu) \subseteq \dom(\nu)$,
  $h(\mu(\bar\kappa)_0) < \nu(\bar\kappa)_0$,   and
  $\mu(\bar\alpha)_0 < \nu(\bar\alpha)_0$ for all $\bar\alpha \in \dom(\mu)$. 
\item A condition $p$ is a non-empty finite sequence whose last entry is denoted $p_{\rightarrow}$,
  where  $p_\rightarrow$ is a pair $(f^{p_\rightarrow}, A^{p_\rightarrow})$.
  Here $f^{p_\rightarrow}$ is a function such that  $\bar \kappa \in \dom(f^{p_\rightarrow}) \subseteq {\mathfrak D}$,
  $\vert \dom(f^{p_\rightarrow}) \vert \le \kappa$,
  and $f^{p_\rightarrow}(\bar\alpha)$ is a finite increasing sequence of tagged extender sequences  whose orders
  are less than $\rho$ and are non-increasing, while $A^{p_\rightarrow}$ 
 is a tree of finite increasing sequences of $\dom(f^{p_\rightarrow})$-objects such that for each $\vec \mu \in T$ the set 
 $Succ_T(\vec \mu) = \{ \nu : {\vec \mu}^\frown \langle \nu \rangle \in T \}$ is in the filter $E(\dom(f^{p_\rightarrow}))$.
\item A condition $p$ has the form  ${p_{\leftarrow}}^\frown \langle p_\rightarrow \rangle$,
  where each entry in $p_\leftarrow$ is a pair $(g, B)$ with $g$ a function and $B$ a tree, 
  where the pair $(g, B)$  is  defined (in essentially the same way that
 $p_\rightarrow$ was just defined from $\vec E$) from some extender sequence $\vec e \in V_\kappa$
  such that $\crit(\vec e) > \rho$ and $\vec e$ reflects the properties of $\vec E \restriction \xi$ in $\Ult(V, E_\xi)$ for some
  $\xi$ with $0 < \xi < \rho$. If $\nu$ is a $d$-object with $o(\nu) > 0$ for some $d$ as above then
  $\langle \nu(\bar\kappa)_{1 + i} : i < o(\nu) \rangle$ would be a typical value for $\vec e$.
\item  If $(g, B)$ is an entry in $p_\leftarrow$, then the associated extender sequence $\vec e$ can be computed
  by inspecting $\dom(g)$, whose least element is the tagged extender sequence
  $\langle \bar \kappa \rangle^\frown  \vec e$ where $\bar \kappa = \crit(\vec e)$.
\item   An entry $q$ in $p_\leftarrow$
  corresponding to $\vec e$ as above is a pair $(f^q, A^q)$
  where $\dom(f^q) \subseteq \{ \langle \beta \rangle^\frown \vec e : \crit(\vec e) \le \beta < h(\crit(\vec e)) \}$,
  the values of $f^q$ are finite increasing sequences of tagged extender sequences with non-increasing orders
   each less than $o(\vec e)$, and $A^q$ is  a tree of 
   finite increasing sequences of $\dom(f^q)$-objects,
   which has large branching with respect to a filter $e(\dom f^q)$.
 \item If the $i^{\rm th}$ entry in $p_\leftarrow$ is defined from a extender sequence $\vec e_i$, then $\crit(e_i)$ increases
   with $i$. 
 \item  A condition can be extended by refining existing entries, or by using sequences from the ``$A$-parts'':
   the second operation typically interpolates new entries between the entry from whose $A$-part the sequence was drawn and
   its immediate predecessor. For the sake of simplicity
   we only describe how to refine $p_\rightarrow$ and how to extend it using a sequence of length one
   $\langle \nu \rangle \in A^{p_\rightarrow}$. 

   A refinement of $p_\rightarrow = (f^{p_\rightarrow}, A^{p_\rightarrow})$ is a pair $(g, B)$
   where $\dom(f^{p_\rightarrow}) \subseteq  \dom(g)$, $g \restriction \dom(f^{p_\rightarrow}) = f^{p_\rightarrow}$,
   and $\{ \vec \nu \restriction \dom(f^{p_\rightarrow}) : \vec \nu \in B \} \subseteq A^{p_\rightarrow}$.

   If $\langle \nu \rangle \in A^{p_\rightarrow}$,
   and $f^{p_\rightarrow}(\bar \alpha)^\frown \langle \nu(\bar\alpha) \rangle$
   is increasing for all $\bar\alpha \in \dom(\nu)$, then we may extend by $\langle \nu \rangle$.
   
  In the special case of $o(\nu) = 0$ we just extend $f^{p_\rightarrow}(\bar \alpha)$
  to $f^{p_\rightarrow}(\bar \alpha)^\frown \langle \nu(\bar\alpha) \rangle$ for $\bar\alpha \in \dom(\nu)$
  and replace $A^{p_\rightarrow}$ by
  $A^{p_\rightarrow}_{\langle \nu \rangle} = \{  \vec \mu : \langle \nu \rangle^\frown \vec \mu \in A^{p_\rightarrow} \}$:
  no new entry is interpolated.

  When $o(\nu) > 0$ we write $f^p(\bar\alpha)$ as $x(\bar\alpha)^\frown y(\bar\alpha)$
  where $y(\bar\alpha)$ is the longest end-segment consisting of tagged extender sequences with order less than $o(\nu)$:
  we replace $f^{p_\rightarrow}(\bar\alpha)$ by $x(\bar\alpha)^\frown \langle \nu(\bar\alpha) \rangle$ for $\bar\alpha \in \dom(\nu)$
  and again replace $A^{p_\rightarrow}$ by $A^{p_\rightarrow}_{\langle \nu \rangle}$.  In this case we interpolate a new
  entry $(h, C)$ associated with the extender sequence
  $\vec e = \langle \nu(\bar\kappa)_{1 + i} : i < o(\nu) \rangle$:
  $\dom(h) = \rge(\nu)$,  $h(\nu(\bar\alpha)) = y(\bar\alpha)$ for each $\bar\alpha \in \dom(\nu)$,
  and $C = A^{p_\rightarrow} \downarrow \nu$ where $A^{p_\rightarrow} \downarrow \nu =
  \{  \vec \mu \circ \nu^{-1} : \mbox{$\vec \mu \in A^{p_\rightarrow}$ and for all $i$ $o(\nu_i) < o(\mu)$ and $\nu_i < \mu$} \}$.
\end{itemize}

We work below the condition with a single entry $(f, A)$ where $\dom(f) = \{ \bar \kappa \}$,
 $f(\bar\kappa) = \langle \rangle$,
and $A$ is the $f$-tree of all finite increasing sequences of $\dom(f)$-objects $\mu$ such that $o(\mu) < \rho$. 
As the definition of extension suggests,  for each $\bar\alpha \in \dom(f^{p_\rightarrow})$ a condition
contains finitely much information about an increasing $\rho$-sequence of tagged extender sequences: some of this
information is contained in $f^{p_\rightarrow}(\bar\alpha)$, but in general
$f^{p_\rightarrow}(\bar\alpha)$ also contains ``pointers'' (in the form of tagged extender sequences)
to extender sequences appearing in the entries of $p_\leftarrow$ and coordinates in those entries where more information
about the sequence associated with $\bar\alpha$ is to be found. 

More formally, let $G$ be $\mathbb P$-generic and work in $V[G]$. 
For each $\alpha$ with $\kappa \le \alpha < \lambda$ the generic sequence $G_\alpha$ is defined
to contain the tagged extender sequences which appear in $f^{p_\rightarrow}(\bar \alpha)$ for some $p \in G$,
enumerated in increasing order. For $j < \rho$ let $G_\alpha(j)$ be the $j^{\rm th}$ entry in $G_\alpha$,
and let $g_\alpha(j) = G_\alpha(j)_0$.

\begin{remark} \label{commute} 
In the light of the discussion above, it may seems counterintuitive that
the definition of $G_\alpha$ and $g_\alpha$ only uses $p_\rightarrow$. To clarify this point
consider how we may extend the trivial condition $(f, A)$ above to control the value of the first entry
$G_\alpha(0)$ in $G_\alpha$.
  We may use an object $\mu$ with $o(\mu) > 0$ and $\bar\alpha \in \dom(\mu)$ to extend
to a condition $p'$ with two entries in which $f^{p'_\rightarrow}(\bar\alpha) = \mu(\bar\alpha)$,
and then in the first entry of $p'$ use an object $\nu'$ of order zero with $\mu(\bar\alpha) \in \dom(\nu')$
to obtain a condition $p''$. Since $\nu' =  \nu \circ \mu^{-1}$ for some $\nu$ of order
zero,  we may also use $\nu$ first to extend to $q = \langle q_\rightarrow \rangle$ with
$f^{q_\rightarrow}(\bar\alpha) = \nu(\bar\alpha)$, and then produce $p''$ by using $\mu$. 
\end{remark}

The forcing poset $\mathbb P$ satisfies a version of the Prikry property, which we will state
formally in Section \ref{scales}. Roughly speaking, for any $p$ any question about the forcing extension can
be decided by refining the entries in $p$. It is also useful to note that if $p$ is a condition
with $p_\leftarrow$ nonempty, then below $p$ the forcing factors as 
${\mathbb P}/p \simeq {\mathbb P}'/p_{\leftarrow} \times {\mathbb P}/p_\rightarrow$,
 where the last entry in $p_\leftarrow$ is defined from an extender sequence $\vec e$,
and
${\mathbb P}'$ is defined from $\vec e$ and $h(\crit(\vec e))$ in the same way that
$\mathbb P$ is defined from $\vec E$ and $\lambda$. Note that $\vert {\mathbb P}' \vert = h(\crit(\vec e))$,
and that using the Prikry property for ${\mathbb P}/p_\rightarrow$ one can show that
$p$ forces  ``if $\crit(\vec e) = g_\kappa(j)$ then all subsets  
  of $g_\kappa(j + \omega)$ lie in the sub-extension by ${\mathbb P}'/p_{\leftarrow}$''. 

  Using the Prikry property and the factorisation, standard arguments show:
\begin{itemize}

\item $\mathbb P$ adds no new subsets of $\rho$, in particular $\rho$ is still regular and uncountable after forcing with
  $\mathbb P$. 
  
\item $\mathbb P$ preserves cardinals, and in the extension $\kappa$ is a strong limit cardinal of cofinality $\rho$.
  In particular $g_\kappa$ is a continuous, increasing and cofinal $\rho$-sequence in $\kappa$. 
  
\item  All the generic sequences $G_\alpha$ have order type $\rho$. 
  
\item If $\kappa < \gamma < \lambda$ then  $g_\kappa(i) < g_\gamma(i) < h(g_\kappa(i)) < g_\kappa(i+1)$ for all large $i$. 

\item $2^{g_\kappa(i)} = h(g_\kappa(i))$ for limit $i$ with $i < \rho$. 

\item  Let $\kappa < \gamma < \lambda$. In the generic extension, for all large $i$:
\begin{itemize}  
\item  If $\gamma$ is regular in $V$, then $g_\gamma(i+1)$ is regular in the generic extension.
\item  If $\gamma$ is measurable in $V$, then $g_\gamma(i+1)$ is measurable in the generic extension.
\item  If $\kappa < \cf(\gamma)$ in $V$, then $g_\kappa(i+1) < \cf(g_\gamma(i+1))$  in the generic extension.
\end{itemize}  

\item GCH holds in the intervals $[h(g_\kappa(i)), g_{\kappa}(i+\omega) )$ for $i$ limit, in particular
 if $\kappa < \gamma < \lambda$ then  GCH holds at $g_\gamma(i+1)$ for all large $i$. 
  
\end{itemize} 

\begin{remark} \label{changes}
  The forcing $\mathbb P$ is ${\mathbb P}_{\vec E, \lambda}$ from \cite{Carmi2011} with the following
  small changes and simplifications:
  \begin{itemize}
  \item Since $\lambda < j_{E_0}(\kappa)$, every coordinate $\bar\alpha$ consists of $\alpha$ followed
    by the whole extender sequence $\vec E$. 
  \item Because of the previous remark and the fact that $\rho < \kappa$,
 every tagged extender sequence
    in the range of a $d$-object contains the same extender sequence, and $E_\sigma(d)$ concentrates
    on objects of order $\sigma$.
  \item The definition of the ordering on $d$-objects is slightly more stringent than in
    \cite{Carmi2011}, but this is harmless because every $d$-object still has $E(d)$-many
    $d$-objects above it.
  \item The definition of the forcing guarantees that no new subsets of $\rho$ are added.
  \item If $q$ is an entry in $p_\leftarrow$ associated with $\vec e$, then the domain of
  $f^q$ can only contain sequences $\langle \beta \rangle^\frown \vec e$ for
  $\crit(\vec e) \le \beta < h(\crit(\vec e))$: this gives us better control over
  the continuum function in the generic extension, in particular it is why 
  $2^{g_\kappa(i)} = h(g_\kappa(i))$ for limit $i < \rho$. 
  \end{itemize}
\end{remark}

\section{Fat trees and iterations} \label{itns} 

Merimovich \cite{Carmi2007} used iterated ultrapowers to analyse names in the extension
by a ``one-extender'' extender based Prikry forcing. Roughly speaking, the iteration maps
afford a compact way of doing integration with respect to product measures which characterise
the trees appearing in the forcing conditions. We will carry out a similar construction here in the
more complicated context of our forcing poset $\mathbb P$ from Section \ref{EBF}: the situation here
is more complicated because in the context of \cite{Carmi2007} there is only one extender to iterate,
while here at stage $n$ we choose $\varepsilon < \rho$ and then apply $j_{0 n}(E_\varepsilon)$.

Recall that if $p$ is a condition with $p_\rightarrow = (f^{p_\rightarrow}, A^{p_\rightarrow})$ and
$\dom(f^{p_\rightarrow}) = d$, then $A^{p_\rightarrow}$ is a tree of finite increasing sequences
with $E(d)$-large branching at each node. Following Merimovich, we call such trees {\em $d$-trees,}
and introduce the related notion of a {\em $d$-fat tree}.

A {\em $d$-fat tree} is a tree $T$ of finite height consisting of finite increasing sequences of $d$-objects,
all of the same length, such that
for every non-maximal node $\vec \mu \in T$ there is $\varepsilon$ such that
$\{ \nu : \vec \mu^{\hskip2pt\frown} \langle \nu \rangle \in T \} \in E_\varepsilon(d)$.
Note that if a tree is $d$-fat then its intersection with any $d$-tree is also $d$-fat, in
particular it is non-empty. 

 Since $\rho < \kappa$, it is easy to see that any $d$-fat tree can be thinned to a $d$-fat subtree such that
  for every non-maximal level $l$, there is $\varepsilon_l < \rho$ such that all points
  on level $l$ have an $E_{\varepsilon_l}(d)$-large set of successors.  Thinning further
  we may also assume that a $d$-fat tree consists of sequences $\vec \nu$ such that
  $\nu_j$ has order $\varepsilon_j$ for all $j$; in a mild abuse of notation we
   say that $\vec \nu$ has order $\vec \varepsilon$.  We say that such a tree is {\em $(\vec \varepsilon, d)$-fat}.

  Given $\vec \varepsilon$ and $d$, define a finite iteration $j_{\vec \varepsilon}$ where we use the extender
  $j_{0i}(E_{\varepsilon_i})$ at stage $i$. As usual, for $m < n $ we let $j_{mn}$ denote the embedding
  from the $m^{\rm th}$ iterate to the $n^{\rm th}$ iterate. 

  \begin{lemma} \label{mc} 
    Let $
  mc_{\vec \varepsilon}(d) = \langle (j_{i \lh(\vec \varepsilon)} \restriction j_{0i}(d))^{-1} : i < \lh(\vec \varepsilon) \rangle.
$
Then $mc_{\vec \varepsilon}(d)$ is a maximal element in $j_{\vec \varepsilon}(T)$ for every $(\vec \varepsilon, d)$-fat tree $T$.
\end{lemma}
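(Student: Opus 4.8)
The plan is to prove the statement by induction on $\lh(\vec\varepsilon)$, peeling off the top stage of the iteration. The base case $\lh(\vec\varepsilon) = 0$ is trivial: a fat tree of height $0$ consists of the single node $\langle\rangle$, and the empty sequence is then (vacuously) its maximal element. For the inductive step, write $n = \lh(\vec\varepsilon)$, let $\vec\varepsilon\,' = \vec\varepsilon\restriction (n-1)$, and consider a $(\vec\varepsilon, d)$-fat tree $T$. The point is to understand how $j_{\vec\varepsilon}$ decomposes as $j_{\vec\varepsilon} = j'_{0,n-1}$ followed by the single ultrapower by $j_{0,n-1}(E_{\varepsilon_{n-1}})$, reconciled with the fact that iterating the first $n-1$ extenders sends a $(\vec\varepsilon, d)$-fat tree to something that, level by level below the top, is a $(\vec\varepsilon\,', d)$-fat tree in the appropriate iterate; the top level's branching, governed by $E_{\varepsilon_{n-1}}(d)$, becomes branching by $j_{0,n-1}(E_{\varepsilon_{n-1}})(j_{0,n-1}(d))$ after applying $j_{0,n-1}$.

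The core computation is the standard identity for the generic object of a single extender ultrapower: if $j = j_E : M \to N$ with $E$ derived from a sufficiently strong embedding and $d \in M$ is a suitable index set, then $(j\restriction j_M(d))^{-1}$ — equivalently $mc(d)$ in the notation of Section~\ref{EBF}, transported to $M$ — is an $E(d)$-object, and for any $M$-set $X$ consisting of $d$-objects we have $(j\restriction j_M(d))^{-1} \in j(X)$ iff $X \in E(d)$. Applying this with $M$ the $(n-1)$st iterate and $E = j_{0,n-1}(E_{\varepsilon_{n-1}})$ (whose associated measure on objects is $j_{0,n-1}(E_{\varepsilon_{n-1}})(j_{0,n-1}(d))$), we get that the last coordinate of $mc_{\vec\varepsilon}(d)$, namely $(j_{n-1,n}\restriction j_{0,n-1}(d))^{-1}$, is exactly the "generic $d$-object at the top stage" and therefore sits in the top-level branching set of $j_{\vec\varepsilon}(T)$ over the node formed by the first $n-1$ coordinates. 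The first $n-1$ coordinates of $mc_{\vec\varepsilon}(d)$, after applying $j_{n-1,n}$ to pass into the final iterate, must be shown to form a maximal node of the height-$(n-1)$ tree obtained as the $j_{n-1,n}$-image (equivalently the $j_{0,n}$-image, restricted appropriately) of the induction hypothesis applied inside $M$ to $j_{0,n-1}(T)$'s restriction. Concatenating gives a maximal node of $j_{\vec\varepsilon}(T)$ of full height $n$, which is precisely $mc_{\vec\varepsilon}(d)$.

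The step I expect to be the main obstacle is bookkeeping the commutation of the iteration maps with the "fatness" structure: one must verify carefully that applying $j_{0,n-1}$ to a $(\vec\varepsilon, d)$-fat tree $T$, and then restricting attention to branches through a particular node, yields a tree to which the induction hypothesis genuinely applies in the iterate $M$ — i.e. that it really is $(j_{0,n-1}(\vec\varepsilon\restriction(n-1)), \cdot)$-fat there, with the right index set — and that the definition of $mc_{\vec\varepsilon}$ via $(j_{i\,\lh(\vec\varepsilon)}\restriction j_{0i}(d))^{-1}$ matches, coordinate by coordinate, the generic objects produced by the single-extender identity at each stage after translating everything into the final iterate. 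Once the indexing is set up so that the single-extender fact can be invoked uniformly at each stage, the argument is routine; getting the indices and domains to line up is where care is needed.
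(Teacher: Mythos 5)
Your proposal is correct and is essentially the paper's own argument: induction on $\lh(\vec\varepsilon)$ peeling off the top ultrapower, where the inductively obtained node is pushed into the final iterate by $j_{n-1,n}$ (the paper verifies it then coincides with the first $n-1$ coordinates of $mc_{\vec\varepsilon}(d)$ via a pointwise-image computation, since each coordinate has size below $\crit(j_{n-1,n})$), and the last coordinate $(j_{n-1,n}\restriction j_{0,n-1}(d))^{-1}$ lands in the top branching set because it generates the measure $j_{0,n-1}(E_{\varepsilon_{n-1}}(d))$ together with $j_{n-1,n}$. The bookkeeping you flag as the main obstacle is exactly the short commutativity/pointwise-image check the paper carries out, so no substantive difference remains.
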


\begin{proof} 

  We prove this by induction on $n > 0$ where  $n=\lh(\vec\varepsilon)$.
  For use in the successor step we note that for every $\zeta < \rho$, since $\lambda < j_{E_\zeta}(\kappa)$
  we have that $d$ is fixed by $j_{E_\zeta}$: appealing to elementarity $j_{0i}(d)$ is fixed by
  $j_{i i+1}$ for all $i$. 
  
  \begin{itemize}

  \item Base case ($n=1$): The empty sequence has an $E_{\varepsilon_0}(d)$-large set of successors in $T$, 
    so by definition $mc_{\vec\varepsilon}(d) = \langle (j_{01} \restriction d)^{-1} \rangle$ is on level one of $j_{01}(T)$.

  \item Successor step: Suppose that $\vec\varepsilon$ has length $n+1$ and $T$ is
    $(\vec\varepsilon, d)$-fat. By the induction hypothesis,
    $mc_{\vec \varepsilon \restriction n}(d) \in j_{0 n}(T)$, where we have
    $mc_{\vec \varepsilon \restriction n}(d) = \discretionary{}{}{}
    \langle (j_{i n} \restriction j_{0i}(d))^{-1} : i < n \rangle$.

   Each entry in $mc_{\vec \varepsilon \restriction n}(d)$
    is a bijective partial function of size at most $j_{0 n-1}(\kappa)$,
    so that $j_{n n+1}$ maps it to its pointwise image: that is,
\[
j_{n n+1}(mc_{\vec \varepsilon \restriction n}(d)) = \langle (j_{i n+1} \restriction j_{0i}(d))^{-1} : i < n \rangle
\]

The tree $j_{0n}(T)$ is $(\vec\varepsilon, j_{0n}(d))$-fat, in particular
since  $mc_{\vec \varepsilon \restriction n}(d)$ is in $j_{0 n}(T)$ it
has a $j_{0n}(E_{\varepsilon_n}(d))$-large set of successors. 
The measure $j_{0n}(E_{\varepsilon_n}(d))$ is generated by the embedding
$j_{n n+1}$ together with the object  
$j_{0 n}(mc_{\varepsilon_n}(d)) = (j_{n n+1} \restriction j_{0n}(d))^{-1}$,
so that
\[
   j_{n n+1}(mc_{\vec \varepsilon \restriction n}(d))^\frown \langle j_{0 n}(mc_{\varepsilon_n}(d)) \rangle 
   = mc_{\vec\varepsilon}(d) \in j_{\vec\varepsilon}(T).
\]

  \end{itemize}

  This concludes the proof of Lemma \ref{mc}. 

  \end{proof}

The embedding $j_{\vec \varepsilon}$  and the sequence
$mc_{\vec \varepsilon}(d)$ can be used to characterise the $(\vec \varepsilon, d)$-fat trees:
for any tree of sequences $U$,
if $mc_{\vec\varepsilon}(d)$ is an element of $j_{\vec \varepsilon}(U)$
then $U$ contains a $(\vec \varepsilon, d)$-fat tree.

For use in Section \ref{scales}, we calculate some values of the entries in  $mc_{\vec \varepsilon}(d)$. 

\begin{lemma} \label{mcvaluelemma} 
  Let $mc = mc_{\vec \varepsilon}(d)$.
  For every $i < \lh(\vec\varepsilon)$ and $\bar \alpha \in d$,
  $mc_i(j_{\vec\varepsilon}(\bar\alpha)) = j_{\vec \varepsilon \restriction i}(\bar\alpha)$.
\end{lemma}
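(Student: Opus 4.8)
The plan is to unwind the definitions of the iteration $j_{\vec\varepsilon}$ and of the sequence $mc_{\vec\varepsilon}(d)$ from Lemma \ref{mc}, so that the statement reduces to the composition law for iterated ultrapower embeddings. Write $n = \lh(\vec\varepsilon)$. By construction $j_{\vec\varepsilon} = j_{0n}$, and since truncating the iteration after $i$ stages recovers the first $i$ iterates we have $j_{\vec\varepsilon \restriction i} = j_{0i}$; moreover iteration maps compose, so $j_{0n} = j_{in} \circ j_{0i}$ for every $i \le n$. From the formula in Lemma \ref{mc}, the $i$-th entry of $mc = mc_{\vec\varepsilon}(d)$ is $mc_i = (j_{in} \restriction j_{0i}(d))^{-1}$, which is a function with domain $j_{in}[j_{0i}(d)]$ (and it is genuinely a function since the elementary embedding $j_{in}$ is injective).

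First I would check that $j_{\vec\varepsilon}(\bar\alpha)$ lies in $\dom(mc_i)$. Since $\bar\alpha \in d$, elementarity gives $j_{0i}(\bar\alpha) \in j_{0i}(d)$, and therefore $j_{\vec\varepsilon}(\bar\alpha) = j_{0n}(\bar\alpha) = j_{in}\bigl(j_{0i}(\bar\alpha)\bigr) \in j_{in}[j_{0i}(d)] = \dom(mc_i)$. Applying $mc_i = (j_{in}\restriction j_{0i}(d))^{-1}$ to $j_{in}\bigl(j_{0i}(\bar\alpha)\bigr)$ then returns $j_{0i}(\bar\alpha)$, which is exactly $j_{\vec\varepsilon\restriction i}(\bar\alpha)$. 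This completes the computation.

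There is no real obstacle here; the only thing requiring care is the bookkeeping --- keeping straight that $j_{\vec\varepsilon}$ is the full $n$-step iteration whereas $j_{\vec\varepsilon\restriction i}$ is its $i$-step truncation, and that the $i$-th entry of the maximal branch $mc_{\vec\varepsilon}(d)$ is built from the \emph{tail} embedding $j_{in}$ and hence precisely ``undoes'' the stages from $i$ onward. One could alternatively verify the claim by induction on $n$, reading the entries of $mc$ off the inductive description in the proof of Lemma \ref{mc}, but it is cleaner to use the closed formula.
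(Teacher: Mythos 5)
Your proposal is correct and follows essentially the same route as the paper: unwind the definition of $mc_i$ as $(j_{i\,\lh(\vec\varepsilon)}\restriction j_{0i}(d))^{-1}$, use elementarity to see $j_{0i}(\bar\alpha)\in j_{0i}(d)$, and use the commutativity (composition) of the iteration maps to conclude $j_{\vec\varepsilon}(\bar\alpha)\in\dom(mc_i)$ with $mc_i(j_{\vec\varepsilon}(\bar\alpha))=j_{\vec\varepsilon\restriction i}(\bar\alpha)$. Nothing is missing.
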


\begin{proof} 
  By definition $mc_i = (j_{i \lh(\vec \varepsilon)} \restriction j_{0i}(d))^{-1}$, and clearly
  $j_{0i} = j_{\vec\varepsilon \restriction i}$ and $j_{\vec\varepsilon} = j_{0 \lh(\vec\varepsilon)}$.
  Since $\bar\alpha \in d$,
  $j_{0i}(\bar\alpha) \in j_{0i}(d)$, and by the commutativity of the embeddings in the 
  iteration, $j_{i \lh(\vec\varepsilon)}(j_{0 i}(\bar\alpha)) = j_{\vec\varepsilon}(\bar\alpha)$.
  It follows that $j_{\vec\varepsilon}(\bar\alpha) \in \dom(mc_i)$ and
  $mc_i(j_{\vec\varepsilon}(\bar\alpha)) = j_{\vec \varepsilon \restriction i}(\bar\alpha)$.
\end{proof}

\section{Scale analysis} \label{scales} 

To use Lemma \ref{scalegeneration}, we need appropriate scales in the generic extension by
$\mathbb P$. For $\kappa \le \alpha < \lambda$ and $\eta < \rho$, let $g^*_\alpha(\eta) = g_\alpha(\omega^\eta + 1)$ 
 The scales we use will be appropriate initial segments of $\langle g^*_\alpha: \kappa \le \alpha < \lambda \rangle$. 

The choice of the indices $\omega^\eta+1$ may seem arbitrary, so we digress briefly to explain it.
Successor indices are needed because for limit $i$ the values of $g_\alpha(i)$ lie in an interval
 where the forcing has destroyed GCH, and this is bad for our intended application. 
 Indecomposable ordinals $\omega^\eta$ are useful because it is comparatively easy to
 design a condition which decides the values of $g_\alpha(\omega^\eta)$ and $g_\alpha(\omega^\eta +1)$,
  see for example the proof of Lemma \ref{strictincreasing} below. See also the discussion of the
 ``offset problem'' below. 

  Our analysis here owes an intellectual debt to work of Merimovich \cite{Carmi2007},
  who proved parallel results
  in the context of the generic $\omega$-sequences added by a ``one-extender'' Prikry forcing
  of the sort discussed in Section \ref{one-extender}. The analysis is harder in some
  respects and easier in others. 

  On the one hand, the complexity of the forcing $\mathbb P$ makes the analysis harder.
  To note a few salient points:
\begin{itemize}
\item The conditions themselves are more complex objects, in particular typically many entries
  in $p_{\leftarrow}$ will themselves contain extender sequences, functions and trees.
\item The connection between a condition $p$ and what it forces about the values
    $g_\alpha(i)$ of the generic functions is much more complex. 
\item All the objects in the one-extender forcing of Section \ref{one-extender} have order $0$ and
   behave in a rather uniform way, while in $\mathbb P$ objects of order $0$ and
   of positive order behave very differently.
 \item On a more technical note, we will need an analysis of
  dense open sets in $\mathbb P$.
 In the case of the one-extender forcing of Section \ref{one-extender} the parallel fact 
 just asserts that if $D$ is dense open and $(f, A)$ is a condition, then there exist an extension
 $(f', A')$ and an integer $n$ such that $f' \restriction \dom(f) = f$, and for every $\vec \nu'$
  of length $n$ in $A'$ the minimal extension of $(f', A')$ using $\nu'$ lies in $D$. 
 Compare this with Lemmas \ref{410} and \ref{411} below. 
\end{itemize}

  On the other hand, one source of difficulty in the one-extender case is that the
  $\omega$-sequences assigned to different coordinates  are ``offset'' from each other
  by a finite amount.  To see the issue note that
 if $f^p(\alpha)$ and $f^p(\beta)$ are finite increasing sequences of ordinals
  with different lengths, and $A^p$ is a tree consisting of objects which all have $\alpha$ and $\beta$
  in their domain, then $\nu(\alpha) < \nu(\beta)$ for all $\nu$ appearing in $A^p$ but
  $p$ only forces that a shifted version of the sequence at $\alpha$ is eventually dominated
  by the sequence at $\beta$.  In our case we are able to avoid this difficulty: the point is that
  since $\rho$ is a regular uncountable cardinal it is a limit of indecomposable ordinals,
  that is those of form $\omega^\eta$, and this helps us argue
  (see Lemma \ref{strictincreasing}) that the sequences $g^*_\alpha$ 
  and $g^*_\beta$ will eventually ``synchronise'' and no offset is needed.    

  If $(f, A)$ is an entry in a condition and $\vec \nu \in A$ then we write $(f, A)_{\vec \nu}$ for
  the sequence of entries obtained by extending $(f, A)$ using each entry in $\vec \nu$ in turn.
  If $p = \langle p_\rightarrow \rangle$ and $\vec \nu \in A^{p_\rightarrow}$ then
  we write $p_{\vec \nu}$ for the condition $(p_\rightarrow)_{\vec \nu}$. In the more general situation
  where $p_\leftarrow$ is non-empty and $\vec \nu \in A^{p_\rightarrow}$, we let
  $p_{\vec \nu} = {p_{\leftarrow}}^\frown (p_{\rightarrow})_{\vec \nu}$. 

\begin{lemma} \label{strictincreasing} 
 $\langle g^*_\alpha : \kappa \le \alpha < \lambda \rangle$
 is strictly increasing in the eventual domination ordering.
\end{lemma}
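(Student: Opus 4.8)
The plan is to fix $\kappa \le \alpha < \beta < \lambda$ and show that $g^*_\alpha <^* g^*_\beta$, i.e. that $g_\alpha(\omega^\eta+1) < g_\beta(\omega^\eta+1)$ for all sufficiently large $\eta < \rho$. First I would reduce to a density statement: it suffices to show that for any condition $p$ with $\bar\alpha, \bar\beta \in \dom(f^{p_\rightarrow})$ (we may always extend to arrange this, since adding coordinates to $f^{p_\rightarrow}$ with trivial values is a legitimate extension), there is an extension $q \le p$ and an $\eta_0 < \rho$ such that $q$ forces $g_\alpha(\omega^\eta+1) < g_\beta(\omega^\eta+1)$ for all $\eta$ with $\eta_0 \le \eta < \rho$. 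Combined with the fact (noted in Section~\ref{EBF}) that $\mathbb P$ adds no new subsets of $\rho$, a genericity argument then yields the claim in $V[G]$.

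The heart of the matter is the construction of the witnessing extension $q$. Here I would exploit the ``synchronisation'' phenomenon described in the discussion before the lemma. Working below $p$, I want to extend $f^{p_\rightarrow}$ so that for every $\bar\alpha' \in \{\bar\alpha, \bar\beta\}$ the sequence $f^{q_\rightarrow}(\bar\alpha')$ ends in a tagged extender sequence of some fixed positive order, arranged so that the two sequences have the \emph{same length} and are ``in phase''. Concretely, pick a single $d$-object $\nu$ of positive order $\xi$ with $\bar\alpha, \bar\beta \in \dom(\nu)$ from $A^{p_\rightarrow}$ and use it to extend; because $\nu$ is order preserving we have $\nu(\bar\alpha)_0 < \nu(\bar\beta)_0$, and the extension mechanism inserts $\nu(\bar\alpha)$ and $\nu(\bar\beta)$ as matching entries in $f^{q_\rightarrow}(\bar\alpha)$ and $f^{q_\rightarrow}(\bar\beta)$. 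Since $\omega^\eta$ for $\eta$ large is a limit of smaller indecomposable ordinals and exceeds the length of whatever finite information $q$ contains, the generic sequences $G_\alpha$ and $G_\beta$ will have $G_\alpha(\omega^\eta)$ and $G_\beta(\omega^\eta)$ both arising as ``fresh'' points drawn from a common large set $A^{q_\rightarrow}$ (or its images), where the order-preservation built into $d$-objects forces $G_\alpha(\omega^\eta)_0 < G_\beta(\omega^\eta)_0$; a further order-zero extension on top controls the $(\omega^\eta+1)$-st entries similarly. The key structural point is that $\mathbb P$ is designed so that every node of every $A$-tree consists of order-preserving $d$-objects, so ``$\bar\alpha$ before $\bar\beta$'' is maintained uniformly at every coordinate and every level, and this is exactly what makes the offset unnecessary. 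One should also use the Prikry property for $\mathbb P/q_\rightarrow$ to ensure the value statement is genuinely decided rather than merely forced for a dense set of further extensions.

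The main obstacle I anticipate is the bookkeeping around the \emph{indices}: one must verify that for large $\eta$, the point $G_\alpha(\omega^\eta)$ really is produced by an elementary extension using a single $d$-object that also supplies $G_\beta(\omega^\eta)$, rather than the two being filled in by unrelated extensions at different stages (which could re-introduce an offset). This is where the remark in Section~\ref{backgroundsection} about shrinking $A^0$ to force $\bar u(\omega^\eta) = \bar u^0$ is relevant: one designs $q$ so that its trees are thinned to guarantee that the interpolation structure around index $\omega^\eta$ is forced to have the desired shape, using that $\omega^\eta$ is indecomposable and larger than the combinatorial complexity of $q$. A secondary subtlety is handling the interaction with $q_\leftarrow$: when a positive-order object is used, a new entry is interpolated into the leftmost part, and one must check (via the factorisation $\mathbb P/q \simeq \mathbb P'/q_\leftarrow \times \mathbb P/q_\rightarrow$ and the fact that the relevant generic values are computed from $q_\rightarrow$ alone, cf. Remark~\ref{commute}) that this does not disturb the comparison. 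Once the shape of $q$ around index $\omega^\eta$ is pinned down, the strict inequality $g_\alpha(\omega^\eta+1) < g_\beta(\omega^\eta+1)$ is essentially immediate from order-preservation plus the fact that $h$ is increasing and $h(g_\kappa(i)) < g_\kappa(i+1)$, so the work is all in setting up $q$ correctly.
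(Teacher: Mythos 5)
Your proposal follows essentially the same route as the paper's proof: a density argument in which one extends $p$ by a single object of positive order whose domain contains both $\bar\alpha$ and $\bar\beta$, and then uses the order preservation built into $d$-objects together with the indecomposability of the indices $\omega^\eta$ to force synchronised strict inequalities from some point on. Two points of execution need tightening. First, the synchronisation is not automatic from picking an arbitrary positive-order object: in the paper one chooses $\eta$ strictly larger than the order of every extender sequence attached to an entry of $p_\leftarrow$ and of every tagged extender sequence occurring in $f^{p_\rightarrow}(\bar\alpha)$ or $f^{p_\rightarrow}(\bar\beta)$, takes $\mu$ with $o(\mu)=\eta$ and $\bar\alpha,\bar\beta\in\dom(\mu)$, and then shrinks the trees of the entries of $p_{\langle\mu\rangle\leftarrow}$ so that only objects of order less than $\eta$ appear. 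Choosing $o(\mu)$ above all orders present in $p$ is exactly what wipes out the existing (possibly different-length) stems at $\bar\alpha$ and $\bar\beta$, since both are moved into the interpolated entry, and the shrinking plus indecomposability of $\omega^\eta$ is what pins $\mu(\bar\alpha)$ and $\mu(\bar\beta)$ at index exactly $\omega^\eta$. This is the precise form of the thinning you gesture at; without the largeness of $o(\mu)$ the offset you worry about can indeed persist.

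Second, your plan to decide the $(\omega^\eta+1)$-st values by a further order-zero extension, backed by the Prikry property, does not work as stated and is not needed. A single condition cannot decide $g_\alpha(\omega^\eta+1)$: after you append an order-zero object $\nu_0$, a later extension by an object of positive order less than $\eta$ moves $\nu_0(\bar\alpha)$ down into a newly interpolated entry, so the eventual $(\omega^\eta+1)$-st entry of $G_\alpha$ may be something else entirely (and the top tree cannot be thinned to avoid positive-order objects, since the filter $E(d)$ concentrates on objects of every order below $\rho$). What the paper's condition $q$ forces instead is the weaker but sufficient statement that for every $i>\omega^\eta$ the values $g_\alpha(i)$ and $g_\beta(i)$ arise from one and the same object $\nu$ appearing in $A^{q_\rightarrow}$ (possibly transported into an interpolated entry, cf.\ Remark \ref{commute}), with $\bar\alpha,\bar\beta\in\dom(\nu)$; order preservation then gives $g_\alpha(i)<g_\beta(i)$ for all such $i$, in particular at every index $\omega^{\eta'}+1$ with $\eta'\ge\eta$, and no appeal to the Prikry lemma is required.
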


\begin{proof}
  Let $p$ be a condition and let $\alpha < \beta < \lambda$.
  Let $\eta < \rho$ be so large that $o(\vec e) < \eta$ for every
   extender sequence $\vec e$ associated with an entry in $p_{\leftarrow}$, and
  also $o(x) < \eta$ for every tagged extender sequence $x$ appearing in
  $f^{p_\rightarrow}(\alpha)$ or $f^{p_\rightarrow}(\beta)$.

  Choose an object $\mu$ such that $\langle \mu \rangle \in A^{p_\rightarrow}$,
  $o(\mu) = \eta$, $\mu(\bar\kappa)_0 > \crit(\vec e)$ for every extender sequence $\vec e$ associated with an entry
  in $p_{\leftarrow}$, and $\bar\alpha, \bar\beta \in \dom(\mu)$. Form the condition $p_{\langle \mu \rangle}$,
  and refine the $A$-parts of the entries in $p_{\langle \mu \rangle \leftarrow}$ so that
  only objects of order less than $\eta$ appear, to obtain a condition $q$.

  Note that $f^{q_\rightarrow}(\bar\alpha) = f^{q_\rightarrow}(\bar\beta) = \langle\rangle$,
  and also $\bar\alpha, \bar\beta \in \dom(\nu)$ for all $\nu$ appearing in $A^{q_\rightarrow}$. 
  It is routine to check that $q$ forces that:
  \begin{itemize}
  \item $g_\alpha(\omega^\eta) = \mu(\bar\alpha)$ and $g_\beta(\omega^\eta) = \mu(\bar\beta)$. 
  \item For all $i$ with $\omega^\eta < i < \rho$, there is $\nu$ appearing in  $A^{q_\rightarrow}$
    such that $g_\alpha(i) = \nu(\bar\alpha)$ and $g_\beta(i) = \nu(\bar\beta)$.
   In particular   $g_\alpha(i) < g_\beta(i)$  since $\nu$ is order-preserving.
 \item  $g^*_\alpha <^* g^*_\beta$.
 \end{itemize} 

\end{proof} 

 Lemma \ref{scale-analysis} {below} is our main technical result. In its proof we will use
two lemmas from \cite{Carmi2011} which give an analysis of dense sets
in the forcing. For the reader's convenience we quote those lemmas here.

We start with the definitions of {\em Prikry extension} and {\em strong Prikry extension} \cite[Definition 4.5]{Carmi2011}. 
\begin{itemize}
\item If
  $p$ and $q$ are conditions in $\mathbb P$ with
  $p = \langle p_\rightarrow \rangle$ and $q = \langle q_\rightarrow \rangle$, 
  then $p$ is a {\em Prikry extension of $q$} ($p \le^* q$)
  if and only if $f^{p_\rightarrow}  \restriction \dom(f^{q_\rightarrow}) = f^{q_\rightarrow}$
  and $A^{p_\rightarrow} \restriction \dom(f^{q_\rightarrow}) \subseteq A^{q_\rightarrow}$.
More generally for $p, q \in {\mathbb P}$ we define recursively $p \le^* q$
if and only if $p_\rightarrow \le^* q_{\rightarrow}$ and $p_\leftarrow \le^* q_{\leftarrow}$: unwrapping the recursion,
this implies that $p$ and $q$ contain the same number of entries and each entry in $p$ is a Prikry extension
of the corresponding entry in $q$. 
\item  For   $p$ and $q$  conditions in $\mathbb P$ with
  $p = \langle p_\rightarrow \rangle$ and $q = \langle q_\rightarrow \rangle$, 
$p$ is a {\em strong Prikry extension of $q$} ($p \le^{**} q$)
if and only if $f^{p_\rightarrow} = f^{q_\rightarrow}$ and $A^{p_\rightarrow} \subseteq A^{q_\rightarrow}$.
 The extension to arbitrary $p$ and $q$ is defined as for the notion of Prikry extension.
\end{itemize} 

The precise statement of the Prikry lemma is that for every condition $q$ 
and every sentence $\phi$ of the forcing language, there is $p \le^* q$ such that
$p$ decides $\phi$. The following fact  is crucial in the discussion that follows:

\begin{fact} \label{orderfact}
  If $p = \langle p_\rightarrow \rangle$ and $q \le p$, then there is a unique $\vec \nu \in A^{p_\rightarrow}$
  such that $q \le^* p_{\vec \nu}$.
\end{fact}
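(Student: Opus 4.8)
\textbf{Proof proposal for Fact \ref{orderfact}.}

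The plan is to prove existence and uniqueness of $\vec\nu$ separately, working with the structure of an arbitrary $q \le p$ where $p = \langle p_\rightarrow\rangle$ has a single entry. First I would unwind the definition of the ordering $q \le p$: by the description of how conditions are extended, $q$ is obtained from $p$ by finitely many ``elementary'' steps, each of which either refines some entry or uses an object (or a short sequence of objects) from some $A$-part to interpolate new entries and extend the coordinate functions. Since $p$ has only one entry, any interpolation that occurs in building $q$ from $p$ must come from sequences drawn from $A^{p_\rightarrow}$ (possibly iterated, as sequences drawn from the $A$-parts of entries that were themselves just interpolated). I would first argue that the net effect of all these steps on the ``$p_\rightarrow$ side'' is captured by a single finite increasing sequence $\vec\nu \in A^{p_\rightarrow}$: namely, collect the objects that were appended to the coordinate functions $f^{p_\rightarrow}(\bar\alpha)$ in increasing order, noting that these objects (or objects projecting to them) lie on a branch of $A^{p_\rightarrow}$ by the definition of $A^{p_\rightarrow}_{\langle\nu\rangle}$ and $A^{p_\rightarrow}\downarrow\nu$, and that the commutativity observed in Remark \ref{commute} lets us normalise the order in which order-zero and positive-order objects were used so that what remains is a genuine increasing sequence in $A^{p_\rightarrow}$. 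Having extracted this $\vec\nu$, I would check that $q \le^* p_{\vec\nu}$: by construction $p_{\vec\nu}$ agrees with $q$ on the $p_\rightarrow$-coordinate functions, i.e. $f^{q_\rightarrow}\restriction\dom(f^{(p_{\vec\nu})_\rightarrow}) = f^{(p_{\vec\nu})_\rightarrow}$, and the remaining moves used to build $q$ from $p_{\vec\nu}$ are all of the ``refining'' type (shrinking $A$-parts, possibly extending domains of the $g$'s and functions deeper inside $q_\leftarrow$), which is exactly what $\le^*$ permits.

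For uniqueness, suppose $\vec\nu$ and $\vec\nu'$ are both in $A^{p_\rightarrow}$ with $q \le^* p_{\vec\nu}$ and $q \le^* p_{\vec\nu'}$. Then $f^{q_\rightarrow}$ extends both $f^{(p_{\vec\nu})_\rightarrow}$ and $f^{(p_{\vec\nu'})_\rightarrow}$, so these two functions agree on their common domain; since for each $\bar\alpha$ in the relevant domain the value $f^{(p_{\vec\nu})_\rightarrow}(\bar\alpha)$ is $f^{p_\rightarrow}(\bar\alpha)$ with the increasing sequence $\langle \nu_k(\bar\alpha) : \nu_k \text{ acts on } \bar\alpha\rangle$ appended (with the appropriate end-segment replacement when orders are positive), the equality of these values forces $\vec\nu$ and $\vec\nu'$ to have the same objects in the same order: here I would use that a $d$-object $\nu$ is determined by its values $\nu(\bar\alpha)$ on its domain together with that domain, and that the order-preserving/order-tracking structure means no two distinct objects on a branch can append the same tagged extender sequences at the same coordinates. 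The length of $\vec\nu$ is pinned down because $f^{q_\rightarrow}$ records exactly how many objects were appended. This gives $\vec\nu = \vec\nu'$.

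The main obstacle I anticipate is the bookkeeping in the existence half when positive-order objects are involved: using an object $\nu$ with $o(\nu) > 0$ does not simply append to the coordinate functions but strips off an end-segment $y(\bar\alpha)$, relocates it into a newly interpolated entry, and passes to $A^{p_\rightarrow}\downarrow\nu$. So when $q$ was built by several such steps (interleaved with order-zero steps and with refinements), reconstructing a single $\vec\nu \in A^{p_\rightarrow}$ that reproduces the $p_\rightarrow$-side of $q$ requires carefully tracking how the ``pointers'' in $f^{q_\rightarrow}(\bar\alpha)$ correspond to entries in $q_\leftarrow$, and invoking the commutativity of Remark \ref{commute} to reorder the elementary steps into the canonical ``all $A^{p_\rightarrow}$-objects first, in increasing order, then pure refinements'' form. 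This is essentially the content of the analysis of dense open sets from \cite[Lemma 4.10, Lemma 4.11]{Carmi2011}, and I would lean on the fact that the $A^{p_\rightarrow}$-side forms a tree under the operations $A \mapsto A_{\langle\nu\rangle}$ and $A \mapsto A\downarrow\nu$, so that any finite composition of these operations is realised along a unique branch.
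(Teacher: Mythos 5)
Your existence half is essentially right: $q$ is obtained from $p$ by finitely many elementary steps, and after normalising with the commutativity of Remark \ref{commute} and pulling objects used in interpolated entries back through $\nu^{-1}$ (the role of $A^{p_\rightarrow}\downarrow\nu$), one extracts $\vec\nu$ as the restrictions to $\dom(f^{p_\rightarrow})$ of the objects used, and the remaining moves are Prikry-type refinements. (The paper itself gives no argument here; the Fact is imported from the definition of the ordering, \cite[Definition 4.5]{Carmi2011}, not from Lemmas 4.10--4.11, which concern dense open sets and are not where this bookkeeping lives.)

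The uniqueness half, however, has a genuine gap: it cannot be run through $f^{q_\rightarrow}$ alone. If $o(\mu)=0<o(\nu)$, $\mu<\nu$, and both $\langle\nu\rangle$ and $\langle\mu,\nu\rangle$ lie in $A^{p_\rightarrow}$, then in forming $p_{\langle\mu,\nu\rangle}$ the appended value $\mu(\bar\alpha)$ is part of the end-segment that the later positive-order object strips off and relocates into the interpolated entry; consequently $(p_{\langle\nu\rangle})_\rightarrow$ and $(p_{\langle\mu,\nu\rangle})_\rightarrow$ have literally the same $f$-part, namely $x(\bar\alpha)^\frown\langle\nu(\bar\alpha)\rangle$. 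So it is false that ``$f^{q_\rightarrow}$ records exactly how many objects were appended,'' and equality of the top-level $f$-parts does not force $\vec\nu=\vec\nu'$: absorbed order-zero objects leave no trace at the top level. The two candidates are distinguished only lower down, where $p_{\langle\nu\rangle}$ and $p_{\langle\mu,\nu\rangle}$ have interpolated entries with the same domain $\rge(\nu)$ but different $f$-parts ($y(\bar\alpha)$ versus $y(\bar\alpha)^\frown\langle\mu(\bar\alpha)\rangle$ at the coordinates $\nu(\bar\alpha)$ with $\bar\alpha\in\dom(\mu)$); since $\le^*$ forces $q$ to have the same number of entries as $p_{\vec\nu}$ and forces entry-by-entry agreement of $f$-parts on their domains, no $q$ can be a $\le^*$-extension of both. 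A correct uniqueness argument must therefore compare $q_\leftarrow$ with $p_{\vec\nu\leftarrow}$ (number of entries, the extender sequences read off from the domains of the entries, and the entries' $f$-parts, which is where the absorbed objects are recorded), rather than appeal to $f^{q_\rightarrow}$ as you do.
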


We refer the reader to \cite[Definition 4.5]{Carmi2011} and  the discussion
in Remark \ref{commute} for more on the ordering of conditions in extender-based Radin forcing.  
In the situation of Fact \ref{orderfact}, or the more general one where $p_\leftarrow$ is non-empty and it's
only a tail of $q$ that is a Prikry extension of $(p_\rightarrow)_\nu$,
we say that {\em $q$ is an extension of $p$ by $\vec \nu$.}

We can view the construction of $q$ from $p$ as happening in stages: first we form $p_{\vec \nu}$ (the minimal extension
of $p$ by $\vec \nu$), then we take a Prikry extension of each entry in $p_{\vec \nu}$. Taking an even more granular
approach, at each entry we can view the process of Prikry extension as occurring by first extending the $f$-part,
and then forming a strong Prikry extension of the resulting entry by shrinking the $A$-part. 
 
Before stating Lemmas \ref{410} and \ref{411}, we need one more piece of notation:
if $T$ is a $d$-fat tree,  $r$ is a function with domain $T$ and $\vec \nu \in T$ then
${\vec r}(\vec \nu) = \langle r(\vec \nu \restriction i) : 0 < i \le \lh(\vec \nu) \rangle$.

The following facts appear as Lemmas 4.10 and 4.11 in \cite{Carmi2011}.

\begin{lemma} \label{410}
    Let $p \in {\mathbb P}$, let $T \subseteq A^{p_\rightarrow}$ be a $\dom(p_{\rightarrow})$-fat subtree, and
    let $r$ be a function with domain $T$ such that
    ${\vec r}(\vec \nu) \le^{**} p_{\vec \nu \leftarrow}$ for every maximal $\vec \nu \in T$. Then there is a
    strong Prikry extension $q \le^{**} p$ such that $q_{\leftarrow} = p_{\leftarrow}$ (that is $q$ is obtained by
    merely replacing $p_{\rightarrow}$ by some strong Prikry extension $q_\rightarrow$) and
    the set of conditions of form ${p_{\leftarrow}}^\frown {\vec r}(\vec \nu)^\frown \langle p_{\vec \nu \rightarrow} \rangle$
    for $\vec \nu \in T$ maximal is predense below $q$. 
\end{lemma}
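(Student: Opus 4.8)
The plan is to prove Lemma \ref{410} by a fusion-style construction along the fat tree $T$, working from the leaves downward, and combining this with the Prikry-type structure theory already in place. The statement is essentially a ``simultaneous decision'' result: for each maximal branch $\vec\nu$ of $T$ we are handed a target strong Prikry extension $\vec r(\vec\nu) \le^{**} p_{\vec\nu\leftarrow}$ of the left part of the minimal extension $p_{\vec\nu}$, and we want a single $q \le^{**} p$ (changing only $p_\rightarrow$) so that the corresponding conditions ${p_\leftarrow}^\frown \vec r(\vec\nu)^\frown \langle p_{\vec\nu\rightarrow}\rangle$ are predense below $q$. The key structural input is Fact \ref{orderfact}: every extension of $p = \langle p_\rightarrow\rangle$ factors uniquely through a minimal extension $p_{\vec\nu}$ for some $\vec\nu \in A^{p_\rightarrow}$, and the $d$-fat tree $T$ meets every $d$-tree (the remark after the definition of $d$-fat tree), so any condition below $q$ whose stem is long enough will be caught by some branch of $T$.

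First I would set up the recursion on the levels of $T$, from the top down. At a maximal node $\vec\nu$ there is nothing to shrink; the value $r(\vec\nu)$ is given. Working down one level, suppose we have for each immediate successor $\nu$ of a node $\vec\mu$ already produced the data we need; since the successor set $\operatorname{Succ}_T(\vec\mu)$ is $E_\varepsilon(d)$-large for some $\varepsilon$, I would use $\kappa$-completeness (more precisely, $E_\varepsilon(d)$ is a $\kappa$-complete ultrafilter and the relevant bookkeeping involves $<\kappa$ many coordinates, since $\vert d\vert \le \kappa$ and $\rho < \kappa$) to stabilise, on an $E_\varepsilon(d)$-large subset of $\operatorname{Succ}_T(\vec\mu)$, the ``shape'' of the $A$-parts being chosen at the entries that will be interpolated when $\vec\mu$ is extended by $\nu$. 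Thinning $T$ level by level in this way, and intersecting the shrunk successor sets at each node, yields a $d$-fat subtree $T' \subseteq T$ together with coherent choices; I then let $A^{q_\rightarrow}$ be $A^{p_\rightarrow}$ intersected with the $d$-tree generated by $T'$ (which is again a legitimate $d$-tree because $T'$ is $d$-fat, so its branching sets lie in the filter $E(d) = \bigcap_\xi E_\xi(d)$), and set $q = {p_\leftarrow}^\frown \langle (f^{p_\rightarrow}, A^{q_\rightarrow})\rangle$, so that indeed $q \le^{**} p$ and $q_\leftarrow = p_\leftarrow$.

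Next I would verify predensity. Let $s \le q$ be arbitrary. By Fact \ref{orderfact} applied to $q_\rightarrow$, there is a unique $\vec\eta \in A^{q_\rightarrow}$ with (a tail of) $s$ a Prikry extension of $q_{\vec\eta}$; since $A^{q_\rightarrow}$ is the $d$-tree of $T'$, the branch through $\vec\eta$ eventually reaches, or passes through, some maximal node $\vec\nu$ of $T'$ — extending $s$ further if necessary so that its stem in the $\rightarrow$-coordinate is at least as long as $\vec\nu$, which is harmless for predensity. Then $s$ (or its extension) is below $q_{\vec\nu} = {p_\leftarrow}^\frown (p_{\vec\nu})_{\text{interpolated}}$, and the coherence arranged in the fusion guarantees that $(p_{\vec\nu})_\leftarrow$ extends $\vec r(\vec\nu)$ as a strong Prikry extension in the way $r$ was prescribed; composing, $s$ is compatible with ${p_\leftarrow}^\frown \vec r(\vec\nu)^\frown \langle p_{\vec\nu\rightarrow}\rangle$. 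That is the required predensity.

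The main obstacle I anticipate is bookkeeping the interpolated entries in $p_\leftarrow$ correctly: when $p_\rightarrow$ is extended by an object $\nu$ of positive order, a whole new entry $(h,C)$ with $C = A^{p_\rightarrow}\downarrow\nu$ gets interpolated, and along a branch $\vec\nu \in T$ several such interpolations happen, so ``$\vec r(\vec\nu) \le^{**} p_{\vec\nu\leftarrow}$'' is a statement about a sequence of entries whose very existence depends on the earlier choices in the branch. Making the level-by-level thinning genuinely coherent — so that the $A$-parts $A^{q_\rightarrow}\downarrow\nu$ of interpolated entries, after passing to the subtree $T'$, still dominate the prescribed $\vec r$ values — is where one must be careful, and it is precisely the place where Lemma \ref{411} (the companion result, which I would expect handles the $f$-part extensions and the strong-to-ordinary Prikry passage) does the complementary work. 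I would lean on the completeness of the measures $E_\varepsilon(d)$ and the fact that each $d$-object only touches $\le\nu(\bar\kappa)_0 < \kappa$ coordinates to push the stabilisation through.
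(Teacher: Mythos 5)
First, note that the paper does not prove this statement at all: it is quoted verbatim from Merimovich (\cite{Carmi2011}, Lemma 4.10), so your attempt has to be judged against what the statement actually requires rather than against an in-paper argument. Judged that way, there is a genuine gap, and it sits at the load-bearing point of your construction. You define $A^{q_\rightarrow}$ as ``$A^{p_\rightarrow}$ intersected with the $d$-tree generated by $T'$'', justified by the claim that the branching sets of the fat tree lie in the filter $E(d)=\bigcap_{\xi<\rho}E_\xi(d)$. That claim is false: a $d$-fat tree has, at each non-maximal node, a successor set that is large for a \emph{single} $E_\varepsilon(d)$, and since $E_\xi(d)$ concentrates on objects of order $\xi$, such a set is null for every other $E_\xi(d)$. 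Hence there is no legitimate $d$-tree (successor sets in the intersection filter, all finite lengths present) whose branches all pass through maximal nodes of $T'$; any admissible $A^{q_\rightarrow}$ must retain, at every node, objects of every order below $\rho$, so a condition $s\le q$ is an extension by some $\vec\eta\in A^{q_\rightarrow}$ which typically meets $T'$ nowhere. Your predensity argument (``the branch through $\vec\eta$ eventually reaches, or passes through, some maximal node of $T'$'') therefore does not get off the ground. The correct mechanism is to extend $s$ \emph{further} by new objects chosen from the successor sets of $T$ intersected with the tree parts of $s$ itself (these intersections are large for the appropriate single measure, and here the remark that a fat tree meets every $d$-tree does real work), interleaving them according to their orders, and then to check that the stray objects of $\vec\eta$ and of $s$'s trees are absorbed into the entries of ${p_\leftarrow}^\frown\vec r(\vec\nu)^\frown\langle p_{\vec\nu\rightarrow}\rangle$; none of this appears in your sketch.

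The other half of the argument, the shrinking of $A^{p_\rightarrow}$, is also not delivered by ``stabilising the shape of the $A$-parts'' via $\kappa$-completeness. What is actually needed is a family of containments: for each node $\vec\sigma=\vec\nu\restriction i$ of $T$ whose last object $\sigma_{i-1}$ has positive order, the tree part of the entry interpolated when $\sigma_{i-1}$ is used — which is computed from the single global tree as $A^{q_\rightarrow}\downarrow\sigma_{i-1}$, possibly further shrunk by $s$ — must be contained in the tree part prescribed by $r(\vec\sigma)$, so that the lower part of the resulting condition is $\le^{**}\vec r(\vec\nu)$. (Your coherence claim is stated in the wrong direction: you assert that $(p_{\vec\nu})_\leftarrow$ extends $\vec r(\vec\nu)$, whereas the hypothesis is $\vec r(\vec\nu)\le^{**}p_{\vec\nu\leftarrow}$, and the whole point of passing from $p$ to $q$ is to force the reverse relation for the entries generated from the \emph{shrunken} tree.) Achieving these containments for the up to $\kappa$ many nodes of $T$ simultaneously is a diagonal-intersection/normality-style shrinking indexed by the nodes (each node constrains which lower-order objects below $\sigma_{i-1}$ may survive, via the map $\vec\mu\mapsto\vec\mu\circ\sigma_{i-1}^{-1}$), not a plain $\kappa$-completeness argument, and your sketch does not identify either the containment to be arranged or the mechanism that arranges it. So while the overall shape of your plan (shrink only the top tree, then complete arbitrary extensions into branches of $T$) is the right one, both of its essential steps fail as written.
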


\begin{lemma} \label{411}
    Let $p \in {\mathbb P}$ with $p = \langle p_\rightarrow \rangle$, and let $D \subseteq {\mathbb P}$
    be a dense open set. Then there exist a Prikry extension $q \le^* p$, a $\dom(f^{q_\rightarrow})$-fat tree
    $T \subseteq A^{q_\rightarrow}$ and a function $r$ with domain $T$ such that
    ${\vec r}(\vec \nu) \le^{**} q_{\vec \nu \leftarrow}$
    and  ${\vec r}(\vec \nu)^\frown \langle q_{\vec \nu \rightarrow} \rangle \in D$  for every maximal $\vec \nu \in T$.
\end{lemma}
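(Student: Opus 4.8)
The plan is to prove Lemmas \ref{410} and \ref{411} together, since \ref{411} is essentially an iterated application of \ref{410} driven by a rank argument. I would first establish Lemma \ref{410} by a ``fusion-free'' shrinking argument internal to a single level of the tree. Given the $d$-fat tree $T \subseteq A^{p_\rightarrow}$ (where $d = \dom(f^{p_\rightarrow})$) and the assignment $r$, process the maximal nodes $\vec\nu \in T$ and think of the associated condition $c_{\vec\nu} = {p_\leftarrow}^\frown {\vec r}(\vec\nu)^\frown \langle p_{\vec\nu\rightarrow}\rangle$. The goal is to find a strong Prikry extension $q_\rightarrow \le^{**} p_\rightarrow$ so that $\{ c_{\vec\nu} : \vec\nu \in T, \vec\nu \text{ maximal}\}$ is predense below $q = {p_\leftarrow}^\frown \langle q_\rightarrow\rangle$. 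The key structural fact is Fact \ref{orderfact}: below $q$, any condition $q'$ lies below a \emph{unique} minimal extension $q_{\vec\mu}$ for some $\vec\mu \in A^{q_\rightarrow}$. Since $T$ is $d$-fat and $A^{q_\rightarrow}$ is a $d$-tree, $T \cap A^{q_\rightarrow}$ remains $d$-fat, so every long enough $\vec\mu$ restricts into a maximal node $\vec\nu$ of $T$; I then need to check that $q_{\vec\mu}$ is compatible with $c_{\vec\nu}$. This is where the hypothesis ${\vec r}(\vec\nu) \le^{**} p_{\vec\nu\leftarrow}$ is used: the left parts match up to strong Prikry extension, and the right parts share $p_{\vec\nu\rightarrow}$, so a common lower bound is obtained by taking the coordinatewise meet of the $A$-parts. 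The subtlety is that $A^{q_\rightarrow}$ must be chosen \emph{before} we know which $\vec\mu$ the generic will pick, so $q_\rightarrow$ is defined by simply setting $A^{q_\rightarrow} = T$ itself (it is a $d$-fat, hence a legitimate $d$-tree after noting $d$-fat subtrees of $d$-trees are $d$-trees), possibly after a further harmless thinning to guarantee the compatibility computation goes through uniformly.

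Next I would deduce Lemma \ref{411} by a recursion on the ``tree rank'' associated with the dense open set $D$. The idea is standard for Prikry-type forcings with a Prikry property: define, for a condition $c$, the least length $n$ of a sequence $\vec\nu \in A^{c_\rightarrow}$ such that some strong-Prikry-type extension of $c_{\vec\nu}$ lies in $D$, using the Prikry lemma quoted in the text to see that such $n$ exists on a dense set; then run the ``diagonal'' construction that for each node $\vec\nu$ of the fat tree being built assigns the witnessing left-part extension ${\vec r}(\vec\nu)$, applying Lemma \ref{410} at each stage to absorb the shrinking of $A^{q_\rightarrow}$ into a single strong Prikry extension. The output $q \le^* p$ arises from composing the (finitely many, bounded by the height of the fat tree) applications of Lemma \ref{410}; since each application only modifies $p_\rightarrow$ by a strong Prikry extension and touches $p_\leftarrow$ not at all at the top level (the left parts get extended only along branches, recorded by $r$), the bookkeeping closes. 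One must verify that the tree $T$ produced is genuinely $d'$-fat for $d' = \dom(f^{q_\rightarrow})$ and that $r$ is well-defined on all of $T$, not just maximal nodes — this follows because at each level the set of ``good'' successors (those from which $D$ can be reached with the prescribed remaining length) is $E_\varepsilon(d')$-large for a suitable $\varepsilon < \rho$, by the Prikry property applied inside the ultrapower, exactly as in Merimovich's original argument.

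The main obstacle I anticipate is the interaction between the ``left part'' $p_\leftarrow$ and the fat-tree construction: in the one-extender forcing of Section \ref{one-extender} there is no left part, so Merimovich's Lemmas 4.10--4.11 are genuinely harder here because using a node $\vec\nu$ of positive order \emph{interpolates new entries} into the left part, and the function $r$ must consistently assign strong Prikry extensions to these freshly-created entries along every branch simultaneously. Handling this requires that the interpolation operation $A \downarrow \nu$ and the restriction $T \restriction d$ commute well enough that a single tree $T$ can encode all the branch-dependent left-part data; concretely, one leans on the fact (noted in the discussion after Fact \ref{orderfact}) that extending by $\vec\nu$ and then taking Prikry extensions can be reorganized as first extending $f$-parts along $\vec\nu$ and then shrinking $A$-parts, so the shrinking data factors through $T$. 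I would treat this by induction on $\lh(\vec\nu)$ exactly as in the proof of Lemma \ref{mc}, using that $d$ (and its images under the relevant ultrapowers) is fixed by the embeddings because $\lambda < j_{E_\zeta}(\kappa)$, so the coordinates do not move and the combinatorics of $d$-objects is stable along the iteration. Since Lemmas \ref{410} and \ref{411} are quoted from \cite{Carmi2011}, for the present paper it suffices to indicate that the cited proofs go through verbatim for $\mathbb P$ in view of Remark \ref{changes}: the simplifications there (every coordinate carries the full $\vec E$, $E_\sigma(d)$ concentrates on order-$\sigma$ objects, no new subsets of $\rho$) only make the arguments easier, never harder.
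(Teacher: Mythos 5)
The paper does not actually prove this statement: Lemmas \ref{410} and \ref{411} are simply quoted from \cite{Carmi2011} (they are Lemmas 4.10 and 4.11 there), with Remark \ref{changes} carrying the burden of ensuring that Merimovich's proofs apply to the variant poset $\mathbb P$. So your closing fallback --- that the cited proofs go through for $\mathbb P$ in view of Remark \ref{changes} --- is in substance the paper's entire argument, and that part of your proposal is consistent with it.

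The proof sketch you offer in place of the citation, however, has a concrete gap, concentrated in your treatment of Lemma \ref{410}. You propose to take $A^{q_\rightarrow}=T$, justified by the claim that a $d$-fat subtree of a $d$-tree is a $d$-tree. That is false: a $d$-fat tree has finite height, so its maximal nodes have no successors at all (let alone an $E(d)$-large set of them), and at non-maximal nodes it is only required that $Succ_T(\vec\mu)$ be $E_\varepsilon(d)$-large for a single $\varepsilon<\rho$, whereas a $d$-tree needs $Succ$ in the filter $E(d)=\bigcap_{\xi<\rho}E_\xi(d)$ at every node. The remark in Section \ref{itns} only says that the intersection of a $d$-fat tree with a $d$-tree is again $d$-fat (hence non-empty), not that it is a $d$-tree. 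Since $q\le^{**}p$ requires $A^{q_\rightarrow}$ to be a legitimate $\dom(f^{p_\rightarrow})$-tree contained in $A^{p_\rightarrow}$, your construction does not even produce a condition. More substantively, the real content of Merimovich's Lemma 4.10 is exactly what this shortcut skips: an arbitrary extension of $q$ proceeds by some $\vec\mu\in A^{q_\rightarrow}$ that need not pass through, or restrict into, $T$ at all --- its objects may have orders and domains unrelated to the levels of $T$, and they interpolate left-part entries that must be made compatible with the strong Prikry extensions ${\vec r}(\vec\nu)$. One therefore has to build $A^{q_\rightarrow}$ by copying the shrinking encoded by $r$ into the tree (a diagonal-intersection style amalgamation over the nodes of $T$), so that any extension of $q$ admits a further extension below some ${p_\leftarrow}^\frown{\vec r}(\vec\nu)^\frown\langle p_{\vec\nu\rightarrow}\rangle$; without this step the predensity claim is unproved. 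Your derivation of Lemma \ref{411} from Lemma \ref{410} by the rank/Prikry-property recursion is the standard route and is fine in outline, but as written it inherits this gap.
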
 

Of course the dense sets which we need to analyse are rather special,
but we have chosen to use the general machinery of \cite{Carmi2011} rather than
reprove the relevant special cases.

\begin{lemma} \label{scale-analysis} 
If $\kappa < \gamma < \lambda$ with $\cf(\gamma) > \kappa$, then
$g^*_\gamma$ is an exact upper bound for $\langle g^*_\delta : \kappa \le \delta < \gamma \rangle$.
\end{lemma}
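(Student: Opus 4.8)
The plan is to verify the two requirements in the definition of exact upper bound. That $g^*_\gamma$ is a $<^*$-upper bound of $\langle g^*_\delta : \kappa \le \delta < \gamma \rangle$ is immediate from Lemma~\ref{strictincreasing}. For the other requirement --- that this sequence is $<^*$-cofinal in the set of functions lying $<^*$-below $g^*_\gamma$ --- a routine density argument reduces the task to the following: whenever $\dot h$ is a $\mathbb{P}$-name and $p \in \mathbb{P}$ with $p \Vdash \dot h \in {}^\rho\mathrm{On}$ and $p \Vdash \dot h <^* g^*_\gamma$, densely below $p$ there is a condition forcing $\dot h <^* g^*_\delta$ for some $\delta < \gamma$.

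Fix $q \le p$. After adjoining the coordinate $\bar\gamma$ to $\dom(f^{q_\rightarrow})$ and passing to a Prikry extension I may assume $\bar\gamma \in \dom(f^{q_\rightarrow})$, and --- an irrelevant finite offset aside, handled exactly as in the paper's treatment of the offset problem --- that $f^{q_\rightarrow}(\bar\gamma) = \langle\rangle$; a further extension fixes $\eta^* < \rho$ with $q \Vdash (\forall \eta \ge \eta^*)\,\dot h(\eta) < g^*_\gamma(\eta)$. For each $\eta \in [\eta^*,\rho)$, the set $D_\eta$ of conditions deciding $\dot h(\eta)$ together with $g_\gamma(\omega^\eta)$ and $g_\gamma(\omega^\eta+1)$ is dense open, and (arguing as in the proof of Lemma~\ref{strictincreasing}) membership in $D_\eta$ forces $g^*_\gamma(\eta)$ to equal the value at $\bar\gamma$ of the order-$0$ object played immediately after the order-$\eta$ object that sits at position $\omega^\eta$. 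Applying Lemma~\ref{411} to $D_\eta$ and iterating over $\eta < \rho$ --- taking $\le^*$-lower bounds at limit stages and at $\rho$, which is legitimate since the filters $E(d)$ are $\kappa$-complete and the filters $e(\dom f)$ have completeness above $\rho$, so that the direct-extension order admits lower bounds for decreasing $\rho$-sequences --- I obtain a single Prikry extension $q' \le^* q$ with the property that for every $\eta \in [\eta^*,\rho)$ there are a $\dom(f^{q'_\rightarrow})$-fat subtree $T_\eta \subseteq A^{q'_\rightarrow}$ of some order $\vec\varepsilon_\eta$ and reduction data $\vec r_\eta$ such that extending $q'$ by any maximal branch $\vec\nu$ of $T_\eta$ together with $\vec r_\eta(\vec\nu)$ lands in $D_\eta$, deciding $\dot h(\eta)$ to a value $V_\eta(\vec\nu)$ and $g^*_\gamma(\eta)$ to $\nu'(\bar\gamma)_0$, where $\nu'$ is the order-$0$ object occurring in $\vec\nu$; in particular $V_\eta(\vec\nu) < \nu'(\bar\gamma)_0$ for every such $\vec\nu$.

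Now I invoke the finite iterations of Section~\ref{itns}. Put $d = \dom(f^{q'_\rightarrow})$ and let $i_\eta$ be the level inside $T_\eta$ of the order-$0$ object $\nu'$. By Lemma~\ref{mc}, $mc_{\vec\varepsilon_\eta}(d)$ is a maximal branch of $j_{\vec\varepsilon_\eta}(T_\eta)$; by Lemma~\ref{mcvaluelemma}, its $i_\eta$-th coordinate sends $j_{\vec\varepsilon_\eta}(\bar\alpha)$ to $j_{\vec\varepsilon_\eta\restriction i_\eta}(\bar\alpha)$ for $\bar\alpha\in d$, whose leading ordinal is $j_{\vec\varepsilon_\eta\restriction i_\eta}(\alpha)$. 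Transferring the inequality $V_\eta(\vec\nu) < \nu'(\bar\gamma)_0$ --- valid for every maximal branch $\vec\nu$ of $T_\eta$ --- along $j_{\vec\varepsilon_\eta}$ and evaluating at $mc_{\vec\varepsilon_\eta}(d)$ gives $j_{\vec\varepsilon_\eta}(V_\eta)(mc_{\vec\varepsilon_\eta}(d)) < j_{\vec\varepsilon_\eta\restriction i_\eta}(\gamma)$. Here $\cf(\gamma) > \kappa$ enters decisively: each ultrapower composing $j_{\vec\varepsilon_\eta\restriction i_\eta}$ is continuous at the relevant image of $\gamma$, since by elementarity that image has cofinality an image of $\cf(\gamma) > \kappa$, which is never the critical point --- an image of $\kappa$ --- of the ultrapower; hence $j_{\vec\varepsilon_\eta\restriction i_\eta}(\gamma) = \sup\{ j_{\vec\varepsilon_\eta\restriction i_\eta}(\delta) : \delta < \gamma\}$, and there is $\delta(\eta) < \gamma$ with $j_{\vec\varepsilon_\eta}(V_\eta)(mc_{\vec\varepsilon_\eta}(d)) < j_{\vec\varepsilon_\eta\restriction i_\eta}(\delta(\eta))$. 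Since the value decided for $\dot h(\eta)$ is unchanged when the domain $d$ is enlarged, $\delta(\eta)$ does not depend on $d$; and since there are at most $\rho$ many $\eta$ and $\cf(\gamma) > \rho$, the ordinal $\delta^\sharp := \sup_\eta \delta(\eta)$ is $<\gamma$. Fix $\delta^\flat$ with $\delta^\sharp < \delta^\flat < \gamma$, extend $q'$ to $q''$ by adjoining $\bar{\delta^\flat}$ to the domain with empty value and restricting $A^{q'_\rightarrow}$ to objects whose domain contains $\bar{\delta^\flat}$, $\bar\gamma$ and $\bar\kappa$, and lift each $T_\eta$ correspondingly; $V_\eta$ and $\delta(\eta)$ are unaffected. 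Because $f^{q''_\rightarrow}(\bar{\delta^\flat}) = f^{q''_\rightarrow}(\bar\gamma) = \langle\rangle$, for all large $\eta$ the same object $\nu'$ that fixes $g^*_\gamma(\eta)$ also fixes $g^*_{\delta^\flat}(\eta) = \nu'(\bar{\delta^\flat})_0 = j_{\vec\varepsilon_\eta\restriction i_\eta}(\delta^\flat)$, and $j_{\vec\varepsilon_\eta}(V_\eta)(mc_{\vec\varepsilon_\eta}(d)) < j_{\vec\varepsilon_\eta\restriction i_\eta}(\delta(\eta)) \le j_{\vec\varepsilon_\eta\restriction i_\eta}(\delta^\flat)$. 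Thus $q'' \Vdash \dot h(\eta) < g^*_{\delta^\flat}(\eta)$ for all large $\eta$, i.e.\ $q'' \Vdash \dot h <^* g^*_{\delta^\flat}$ with $\delta^\flat < \gamma$; this completes the density argument.

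The hard part is the third step --- specifically, the fusion-and-translation: running the simultaneous reduction of $\dot h(\eta)$ together with $g_\gamma(\omega^\eta)$ and $g_\gamma(\omega^\eta+1)$ so that the $\omega^\eta$-bookkeeping of Lemma~\ref{strictincreasing} is respected, writing the decided values through the iterated ultrapowers of Section~\ref{itns}, and extracting the bound from continuity of those embeddings at $\gamma$. The length-$\rho$ fusion, the existence of lower bounds in the direct-extension order, and the standard extender-based-forcing manipulations needed to make Lemma~\ref{411} yield a condition that decides $g^*_\gamma(\eta)$ along each branch are routine but not entirely trivial ingredients.
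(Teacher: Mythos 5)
Your outline follows the paper's strategy closely (dense sets deciding $\dot h(\eta)$ and $g^*_\gamma(\eta)$, a length-$\rho$ fusion of applications of Lemma \ref{411}, the observation that an order-$0$ object immediately after the order-$\eta$ object decides $g^*_\gamma(\eta)$, transfer through the finite iterations of Section \ref{itns}, continuity at $\gamma$, and enlarging the domain to a new coordinate $\bar\delta$), but it has a genuine gap at its foundation: you never deal with the low part of a condition. Lemma \ref{411} is stated only for conditions of the form $\langle p_\rightarrow\rangle$, and the density argument must handle arbitrary $q\le p$ with $q_\leftarrow\neq\emptyset$ (indeed your own set $D_\eta$ forces nonempty low parts, since deciding $g_\gamma(\omega^\eta)$ requires low entries). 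The problem is not merely formal: when $q_\leftarrow\neq\emptyset$, the forcing below $q$ factors as ${\mathbb P}_{\rm low}\times{\mathbb P}_{\rm high}$ and the value of $\dot h(\eta)$ can depend on the ${\mathbb P}_{\rm low}$-generic, so no fat tree together with $\le^{**}$-reduction data in the high part can decide $\dot h(\eta)$; your function $V_\eta$ need not exist. The paper closes exactly this hole: it first arranges $\cf(\nu(\bar\gamma)_0)>\nu(\bar\kappa)_0$ for all objects appearing in the condition, and then replaces $\dot h(\zeta)$ by a ${\mathbb P}_{\rm high}$-name $\dot\sigma_\zeta$ with $\dot h(\zeta)\le\dot\sigma_\zeta< g^*_\gamma(\zeta)$, which is possible because $\vert{\mathbb P}_{\rm low}\vert=h(\kappa^*)$ is forced to be below $\cf(g^*_\gamma(\zeta))$ --- a second, essential use of the hypothesis $\cf(\gamma)>\kappa$ that appears nowhere in your argument. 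Without this reduction the entire fat-tree analysis, and hence everything downstream, is unavailable.

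There is a second, smaller but real, gap at the end: from ``for every maximal branch $\vec\nu$ of the (lifted) tree, the extension of $q''$ by $\vec\nu$ together with the reduction data forces $\dot h(\eta)<g^*_{\delta^\flat}(\eta)$'' you cannot conclude that $q''$ itself forces this. One must still pass to a strong Prikry extension of $q''$ below which, for each $\eta<\rho$, the family of such branch-extensions is predense; this is precisely $\rho$ many applications of Lemma \ref{410}, which you never invoke. Relatedly, the identity $\nu'(\bar{\delta^\flat})_0=j_{\vec\varepsilon_\eta\restriction i_\eta}(\delta^\flat)$ is false as written for actual objects $\nu'$ (the left-hand side is an ordinal below $\kappa$): the correct step is to verify the inequality at the point $mc_{\vec\varepsilon_\eta}(\dom f^{q''_\rightarrow})$ in the iterated ultrapower and then reflect it back, via the characterization of fat trees by the maps $j_{\vec\varepsilon_\eta}$, to a fat subtree of branches on which the decided value of $g^*_{\delta^\flat}(\eta)$, namely $\nu_{i_\eta}(\bar{\delta^\flat})_0$, exceeds the decided value of $\dot h(\eta)$. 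Finally, your justification of continuity (``the cofinality is never the critical point'') is not sufficient in general; what is used is that each $j_{\vec\varepsilon_\eta\restriction i_\eta}$ is a finite composition of ultrapowers by short extenders with critical point (an image of) $\kappa$, hence continuous at ordinals of cofinality greater than $\kappa$.
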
 

\begin{proof}

 Let $p \in {\mathbb P}$ and $\langle \dot \tau_\eta : \eta < \rho \rangle$ be 
 such that $p \forces \forall \eta < \rho \; \dot \tau_\eta < \dot g^*_\gamma(\eta)$.
 We will ultimately produce a condition $q \le p$, together with ordinals $\delta < \gamma$ and $\eta < \rho$ such that
 $q \forces \forall \zeta > \eta \; \tau_\zeta < \dot g^*_\delta(\zeta)$.

\begin{claim}
  Extending $p$ if necessary, we may assume that: 
  \begin{itemize}
  \item Both $\bar\kappa$ and $\bar\gamma$ are in $\dom(f^{p_\rightarrow})$. 
  \item For every object $\nu$ appearing in any sequence from  $A^{p_\rightarrow}$, 
  both $\bar \kappa$ and $\bar \gamma$ appear in $\dom(\nu)$, and 
   $\cf(\nu(\bar \gamma)_0) > \nu(\bar\kappa)_0$. 
  \item  $f^{p_\rightarrow}(\bar \kappa) = f^{p_\rightarrow}(\bar \gamma) = \langle \rangle$.
  \item There is $\eta < \rho$ such that: 
\begin{itemize} 
    \item  $p_{\leftarrow}$ determines the values of $g_\kappa(\omega^\eta)$ and $g_\gamma(\omega^\eta)$,
    say as $\kappa^*$ and $\gamma^*$.
     \item If $q \le p$ via some $\vec \nu \in A^{p_\rightarrow}$, and $q$ determines
      $g_\kappa(i)$ or $g_\gamma(i)$ for some $i$ with $\omega^\eta < i$,
     then the minimal extension $p_{\vec \nu}$ already determines $g_\kappa(i)$ and $g_\mu(i)$.
      \item  $p$ forces that $\cf(g_\gamma(i)) > g_\kappa(i)$ for all successor $i > \omega^\eta$.  
\end{itemize} 
\end{itemize} 
\end{claim}

\begin{proof}
  Choose $\eta$ as in the proof of Lemma \ref{strictincreasing}, 
  and then replace $p$ by a suitable strong Prikry extension
   of $p_{\langle \mu \rangle}$ for some $\langle \mu \rangle \in A^{p_\rightarrow}$
  with $o(\mu) = \eta$.  Now $p_\leftarrow$ determines
  $g_\kappa(\omega^\eta)$ as $\mu(\bar\kappa)_0$
  and $g_\gamma(\omega^\eta)$ as $\mu(\bar\gamma)_0$. If $q$ extends $p$ via $\vec \nu$, then
  $\bar\kappa$ and $\bar\gamma$ are in $\dom(\nu_k)$ for all $k$, and the minimal extension already
  determines the relevant values. 
\end{proof}

\begin{claim} Without loss of generality, we may assume that $p = p_{\rightarrow}$.
\end{claim} 
 
\begin{proof} 
  As in the discussion at the end of Section \ref{EBF},
  ${\mathbb P}/p$ is isomorphic to the product of a  ``low part'' ${\mathbb P}_{\rm low}$
   below $p_\leftarrow$ and 
   a ``high part'' ${\mathbb P}_{\rm high}$  below $p_{\rightarrow}$.
   We view $\dot \tau_\eta$ as a ${\mathbb P}_{\rm high}$-name
  for a ${\mathbb P}_{\rm low}$-name. Since ${\mathbb P}_{\rm low}$ has size $h(\kappa^*)$,
  and $p$ forces $\cf(g^*_\gamma(\zeta)) > g^*_\kappa(\zeta)$ for all $\zeta > \eta$, 
  it is easy to find  ${\mathbb P}_{\rm high}$-names $\dot \sigma_\zeta$ for $\zeta > \eta$ such that
  $p \forces \forall \zeta > \eta \; \dot \tau_\zeta < \dot \sigma_\zeta < g^*_\gamma(\zeta)$.
  Replacing  $\mathbb P$ by ${\mathbb P}_{\rm high}$, $p$ by $p_{\rightarrow}$
  and $\dot \tau_\zeta$ by {$\dot \sigma_{\zeta}$}, we have the claim. 
\end{proof}

In the light of the preceding Claim, it is clearly sufficient to prove that we can find $\delta < \gamma$
such that $p$ extends to
a condition forcing $\forall \zeta < \rho \; \dot\tau_\zeta < g_\delta^*(\zeta)$. 

  For each $\zeta$ with $\zeta < \rho$, let $D_\zeta$ be the dense open set of conditions
  $t$ in ${\mathbb P}$ such that:
  \begin{itemize}
  \item $t$ determines the values of $\tau_\zeta$ and $g^*_\gamma(\zeta)$.
  \item $t_\leftarrow$ has at least one entry defined from an extender sequence with order
  $\zeta$.\footnote{{$\hskip2pt$} The first requirement on $t$ actually implies the second one, but we preferred to make this
    point explicit.}
   \end{itemize}

\begin{claim} \label{letscallthis} 
  There exist $q \le^* p$, integers $n_\zeta$, $\dom(f^{q_\rightarrow})$-fat subtrees $T_\zeta$ of $A^{q_\rightarrow}$  
  with height $n_\zeta$, and functions $R_\zeta$ and $h_\zeta$ for $\zeta < \rho$ with the following properties.
  For all $\zeta < \rho$ and all maximal $\vec \nu \in T_\zeta$:
  \begin{itemize}
  \item ${\vec R}_\zeta(\vec \nu) \le^{**} q_{{\vec \nu}\leftarrow}$.
  \item ${\vec R}_\zeta(\vec \nu)^\frown \langle q_{{\vec \nu}\rightarrow} \rangle \in D_\zeta$.
  \item
  $h_\zeta(\vec \nu)$ is the value which ${\vec R}_\zeta(\vec \nu)^\frown \langle q_{{\vec \nu}\rightarrow} \rangle$
  determines for $\tau_\zeta$. 
  \end{itemize}
\end{claim}

\begin{proof} 

  We will build a $\le^*$ decreasing chain $\langle q^\zeta : \zeta < \rho \rangle$, together with
  trees $S^\zeta$ and functions $R^\zeta$, such that:
  \begin{itemize}
  \item $q^0 = p$.
  \item $S^\zeta$ is a $\dom(f^{q^{\zeta+1}_\rightarrow})$-fat tree.
  \item For all maximal $\vec \nu \in S^\zeta$,
   ${\vec R^\zeta}(\vec \nu) \le^{**} q^{\zeta+1}_{{\vec \nu}\leftarrow}$
    and ${\vec R}^\zeta(\vec \nu)^\frown \langle q^{\zeta+1}_{{\vec \nu}\rightarrow} \rangle \in D_\zeta$.
  \end{itemize}
  Once we have chosen $q^\zeta$, we appeal to Lemma \ref{411} to produce $q^{\zeta+1} \le^* q^\zeta$
  together with $S^\zeta$ and $R^\zeta$. To choose $q^\zeta$ for $\zeta$ limit we use the
  $\kappa$-completeness of the $\le^*$ ordering and the fact that $\rho < \kappa$.

  At the end of the construction, let $q$ be a lower bound in the $\le^*$-ordering for
  the sequence $\langle q^\zeta: \zeta < \rho \rangle$. 

  \begin{subclaim} \label{thinkofone} 
    For every $\zeta < \rho$: 
\begin{itemize}
\item  For every $\vec \nu \in A^{q_\rightarrow}$,
       $q_{\vec \nu} \le^* q^\zeta_{\vec \nu \restriction \dom(f^{q^\zeta_\rightarrow})}$.
\item If we let $T_\zeta = \{ \vec \nu \in A^{q_\rightarrow} : \vec \nu \restriction \dom(f^{q^{\zeta+1}_\rightarrow})  \in S^\zeta \}$, then
    $T_\zeta$ is a $\dom(f^{q_\rightarrow})$-fat tree with the same height as $S^\zeta$. 
\item
    There exists a  function $R_\zeta$ with domain $T_\zeta$, such that   
    ${\vec R}_\zeta(\vec \nu) \le^{**} q_{{\vec \nu}\leftarrow}$ and
    ${\vec R}_\zeta(\vec \nu) \le^* {\vec R}^\zeta(\vec \nu \restriction \dom(f^{q^{\zeta+1}_\rightarrow}))$.
\end{itemize}
\end{subclaim} 

\begin{proof} We take each assertion in turn.

\begin{itemize}

\item  To lighten the notation, let  $f=f^{q_\rightarrow}$, $A = A^{q_\rightarrow}$, $q' = q^\zeta$, $f' = f^{{q^\zeta}_\rightarrow}$,
 $A' = A^{{q^\zeta}_\rightarrow}$,  and  $d' = \dom(f')$
 Let $\vec \nu \in A$, and note that since $q \le^* q'$
  we have $\nu' \in A'$  where $\nu' = \vec \nu \restriction d'$. Note also that
  $o(\nu_i) = o(\nu'_i)$ for all $i < \lh(\nu)$.

  Now we compare the construction process for entries in $q_{\vec \nu}$ and $q'_{\vec \nu'}$. 
  Since $q \le^* q'$, $f \restriction d' = f'$
  and $A \restriction d' \subseteq A'$.
  By definition $A^{q_{{\vec \nu} \rightarrow}} = A^{q_\rightarrow}_{\vec \nu}$: if $\vec \mu \in A^{q_{{\vec \nu} \rightarrow}}$ then
  ${\vec \nu}^{\hskip2pt\frown} \vec \mu \in A$, so 
  $({\vec \nu} \restriction d')^\frown (\vec \mu \restriction d') \in A'$,
  and hence  $\vec \mu \restriction d' \in A'_{\vec \nu'}  =  A^{q'_{\vec \nu' \rightarrow}}$. As for the $f$-parts,
  $\dom(f^{q_{{\vec \nu}\rightarrow}}) = \dom(f)$,  $\dom(f^{q'_{{\vec \nu'}\rightarrow}}) = \dom(f') = d'$, 
  and the value of $f^{q_{{\vec \nu}\rightarrow}}(\bar\alpha)$ depends only on $f^{q_\rightarrow}(\bar\alpha)$ and $\nu(\bar\alpha)$,
  so that $f^{q_{{\vec \nu}\rightarrow}} \restriction d'  = f^{q'_{{\vec \nu'}\rightarrow}}$. 

  The argument comparing entries in $q_{{\vec \nu}\leftarrow}$ and $q'_{{\vec \nu'}\leftarrow}$ is quite similar. Suppose that
  $o(\nu_i) > 0$, so that using $\nu_i$ generates an entry $(g, B)$ in $q_{{\vec \nu}\leftarrow}$ with
  $(g', B')$ the corresponding entry in $q'_{{\vec \nu'}\leftarrow}$. Note that $\dom(g) = \rge(\nu_i)$ and $\dom(g') = \rge(\nu'_i)$.
 The value of $g(\nu_i(\bar\alpha))$ depends only on the
  values of $\nu_j(\bar\alpha)$ (for $j$ such that $\bar\alpha \in \dom(\nu_j)$) and $f^{q_\rightarrow}(\bar\alpha)$,
  so that easily $g \restriction \dom(g') = g'$. If $\vec \mu \in B = A \downarrow \nu_i$,    
  then $\vec \mu = \vec \mu^* \circ \nu_i^{-1}$ for some $\vec \mu^* \in A$ with $o(\mu^*_k) < o(\nu_i)$ and  $\mu^*_k < \nu_i$
  for all $k < \lh(\vec \mu^*)$. Then $\vec \mu^* \restriction d' \in A'$,  $o(\mu^*_k \restriction d') < o(\nu_i')$
  and $\mu^*_k \restriction d' < \nu_i'$
  for all $k$, so that $\vec \mu \restriction \dom(g') \in B' = A' \downarrow \nu'$. 
  
\item Since $S^\zeta$ is a tree it is easy to see that $T_\zeta$ is a tree. 
 Let $\vec \nu \in T_\zeta$, let $d' = \dom(f^{q^{\zeta+1}_\rightarrow})$ and let $\vec \nu' = \nu \restriction d'$,
  so that $\vec \nu' \in S^\zeta$. Suppose that $\nu'$ is not maximal in $S^\zeta$.
 Since $S^\zeta$ is a fat tree there is an $i < \rho$ such that
 $\{ \mu' : {\vec \nu'}{}^\frown \langle \mu' \rangle \in S^\zeta \} \in E_i(d')$, and
 then since $A^{q_\rightarrow}$ is an $E(\dom(f^{q_\rightarrow})$-tree and $\vec \nu \in A^{q_\rightarrow}$ 
 we also have $\{ \mu : {\vec \nu}^\frown \langle \mu \rangle \in A^{q \rightarrow} \} \in E_i(\dom(f^{q_\rightarrow})$
 Since $E_i(d')$ is the projection of $E_i(\dom(f^{q_\rightarrow})$ via the restriction map
$\mu \mapsto \mu \restriction d'$, we also have 
$\{ \mu : {\vec \nu'}{}^\frown \langle \mu \restriction d' \rangle \in S^\zeta \} \in E_i(\dom(f^{q_\rightarrow})$.
 So $\{ \mu : {\vec \nu}^\frown \langle \mu \rangle \in T_\zeta \} \in E_i(\dom(f^{q_\rightarrow})$.

 It follows easily that $T_\zeta$ is a fat tree with the same height as $S^\zeta$. In particular 
 if $\vec \nu \in T_\zeta$ is maximal  then $\vec \nu \restriction d'$ is maximal in $S^\zeta$.  
  
\item
  To define $R_\zeta(\vec \nu)$ for $\vec \nu \in T^\zeta$, let
  $\nu' = \vec \nu \restriction \dom(f^{q^{\zeta+1}_\rightarrow})$ so that $\nu' \in S^\zeta$. 
  By the choice of $R^\zeta$,  ${\vec R^\zeta}(\vec \nu') \le^{**} q^{\zeta+1}_{{\vec \nu'}\leftarrow}$.  
  As we just showed, $q_{\vec \nu} \le^* q^{\zeta+1}_{\vec \nu'}$.

  Let $\nu_i$ be the last entry in $\vec \nu$. If $o(\nu_i) = 0$ there is nothing to do,
  so assume that  $o(\nu_i) > 0$. Let $(g', B')$ and $(g, B)$ be the last entries
  in $q^{\zeta+1}_{{\vec \nu'}\leftarrow}$ and $q_{{\vec \nu}\leftarrow}$ respectively,
  so that they correspond to $\nu_i'$ and $\nu_i$.  
  Let $(g', C') = {\vec R^\zeta}(\vec \nu')$, so that $C' \subseteq B'$.
  Let $C = \{ \vec \mu \in B : \vec \mu \restriction \dom(g') \in C' \}$
    and note that $(g, C)$ is a legitimate entry with $(g, C) \le^{**} (g, B)$ and $(g, C) \le^* (g', B')$.
  Set  $R_\zeta(\vec \nu)$ equal to $(g, C)$. 

\end{itemize} 

   This concludes the proof of Subclaim \ref{thinkofone}.
\end{proof}

   Let $\vec \nu \in T_\zeta$ be maximal.
   By Subclaim \ref{thinkofone},   ${\vec R}_\zeta(\vec \nu) \le^{**} q_{{\vec \nu}\leftarrow}$.   
   By the choice of $R^\zeta$, we have ${\vec R}^\zeta(\vec \nu \restriction \dom(f^{q^{\zeta+1}_\rightarrow}))^\frown \langle q^{\zeta+1}_{{\vec \nu}\rightarrow} \rangle \in D_\zeta$.
   By Subclaim \ref{thinkofone} again, ${\vec R}_\zeta(\vec \nu) \le^* {\vec R}^\zeta(\vec \nu \restriction \dom(f^{q^{\zeta+1}_\rightarrow}))$,
   and also $q_{\vec \nu} \le^* q^\zeta_{\vec \nu \restriction \dom(f^{q^\zeta_\rightarrow})}$.
   It follows that
   ${\vec R}_\zeta(\vec \nu)^\frown \langle q_{{\vec \nu}\rightarrow} \rangle \le^* {\vec R}^\zeta(\vec \nu \restriction \dom(f^{q^{\zeta+1}_\rightarrow}))^\frown \langle q^{\zeta+1}_{{\vec \nu}\rightarrow} \rangle$, 
    and so since $D_\zeta$ is open that ${\vec R}_\zeta(\vec \nu)^\frown \langle q_{{\vec \nu}\rightarrow} \rangle \in D_\zeta$.
   By the definition of $D_\zeta$, we may now choose $h_\zeta(\vec \nu)$ to be the value which ${\vec R}_\zeta(\vec \nu)^\frown \langle q_{{\vec \nu}\rightarrow} \rangle$
  determines for $\tau_\zeta$.

   This concludes the proof of Claim \ref{letscallthis}.
\end{proof} 

Replacing $T_\zeta$ by a subtree if necessary, we may assume that 
for every $\zeta < \rho$ there is a sequence 
${\vec \varepsilon}_\zeta$ such that 
 $T_\zeta$ is a $( {\vec \varepsilon}_\zeta, \dom(f^{q_\rightarrow}))$-fat tree.
It is immediate from the definition of the set $D_\zeta$ that $\zeta$ appears at
least once in the sequence ${\vec \varepsilon}_\zeta$.

Let $\zeta < \rho$  and let the first appearance of $\zeta$
in ${\vec \varepsilon}_\zeta$ have index $n_\zeta$.
  Shrinking the values of the function $R_\zeta$ if necessary, we may assume that
  for all maximal $\vec \nu \in T_\zeta$, all objects appearing in the tree
  parts of the entries in ${\vec R}_\zeta(\vec \nu \restriction n_\zeta)$ 
  have order less than $\zeta$. The advantage of this is that
  now for every maximal $\vec \nu \in T_\zeta$,
  ${\vec R}_\zeta(\vec \nu)^\frown \langle q_{{\vec \nu}\rightarrow} \rangle$ 
  decides the value of  $g_\gamma(\omega^\zeta)$ as $\nu_{n_\zeta}(\bar\gamma)_0$. 

  \begin{claim}
    Let $\zeta < \rho$, ${\vec \varepsilon} = {\vec \varepsilon}_\zeta$ and $n = n_\zeta$. Then:
\begin{itemize}
\item $\varepsilon_{n + 1} = 0$. 
\item For all maximal $\vec \nu \in T_\zeta$,
  the condition    ${\vec R}_\zeta(\vec \nu)^\frown \langle q_{{\vec \nu}\rightarrow} \rangle$
  decides the value of $g^*_\gamma(\zeta)$ as $\nu_{n+1}(\bar \gamma)_0$. 
\end{itemize}
\end{claim}

\begin{proof}
   For the first claim, suppose for a contradiction that either $\varepsilon_n$ is the last entry of
  ${\vec \varepsilon}$ or $\varepsilon_{n+1} > 0$. Let $\vec \nu \in T_\zeta$ be maximal
  and let $t = {\vec R}_\zeta(\vec \nu)^\frown \langle q_{{\vec \nu}\rightarrow} \rangle$ 
  so that $t$ determines the value of $g_\gamma(\omega^\zeta)$ as $\nu_n(\bar\gamma)_0$,
 and the value $g_\gamma^*(\zeta)$ as $\theta$ say.  

  At this point we need to be slightly careful, and keep in mind that when we use objects
  of order zero in $\vec \nu$ in the construction of $q_{\vec \nu}$ they do not give rise to new entries.
  Accordingly let $\varepsilon_n$ have index $m$ in the increasing enumeration of the non-zero entries
  of $\vec\varepsilon$, and note that if it exists $\varepsilon_{n+1}$ has index $m+1$. 

   Since entry $m+1$ in $t$ is defined from an extender sequence $\vec e$ with
  $\crit(\vec e) > \theta$, we may now extend $t$ using an object of order zero drawn from the tree part
   of entry $m+1$ to force $g_\gamma(\omega^\zeta+1) > \theta$. This contradiction establishes the first claim,
   and the second claim follows immediately. 
\end{proof}

For each $\zeta < \rho$, we form the iteration $j_{{\vec \varepsilon}_\zeta}$ as in Section \ref{itns}.

\begin{claim} Let $\zeta < \rho$, ${\vec \varepsilon} = {\vec \varepsilon}_\zeta$, $n = n_\zeta$
  and $mc = mc_{\vec \varepsilon}(\dom f^q)$.
  Then for all large $\delta < \gamma$, 
$j_{\vec \varepsilon}(h_\zeta)(mc) < j_{\vec \varepsilon \restriction n +1}(\delta)$

\end{claim}  

\begin{proof}
  Recall that for $\vec \nu$ maximal in $T_\zeta$,
  $h_\zeta(\vec \nu)$ is the value which ${\vec R}_\zeta(\vec \nu)^\frown \langle q_{{\vec \nu}\rightarrow} \rangle$
  determines for $\tau_\zeta$, and  $\nu_{n+1}(\bar \gamma)_0$ is the value it determines for $g^*_\gamma(\zeta)$,
  so $h_\zeta(\vec \nu) < \nu_{n+1}(\bar \gamma)_0$.
  Recall also that $mc$ is a maximal element in $j_{\vec\varepsilon}(T_\zeta)$. 
  
By elementarity,
$j_{\vec \varepsilon}(h_\zeta)(mc) < mc_{n+1}(j_{\vec \varepsilon}(\bar \gamma))_0$.
By Lemma \ref{mcvaluelemma},
$mc_{n+1}(j_{\vec \varepsilon}(\bar \gamma)) = j_{\vec \varepsilon \restriction n +1}(\gamma)$.
Since $\cf(\gamma) > \kappa$, and $j_{\vec \varepsilon \restriction n +1}$ can be represented as the ultrapower by
 a short extender with critical point $\kappa$,
 we have that $j_{\vec \varepsilon \restriction n +1}$ is continuous at $\gamma$ and the claim follows.
\end{proof} 

Recall that $\dom(f^{q_\rightarrow})$ is bounded in $\gamma$, because $\cf(\gamma) > \kappa$. 
Hence we can choose $\delta < \gamma$ so large that $\bar\delta \notin \dom(f^{q_\rightarrow}) \cap \gamma$, and
$j_{{\vec \varepsilon}_\zeta}(h_\zeta)(mc_{{\vec \varepsilon}_\zeta}(\dom(f^{q_\rightarrow}))) < j_{{\vec \varepsilon}_\zeta \restriction n +1}(\delta)$
for all $\zeta < \rho$. 
We find $q' \le^* q$ such that $\bar\delta \in \dom(f^{q'_\rightarrow})$ and $f^{q'_\rightarrow}(\bar \delta) = \langle \rangle$.
For each $\zeta < \rho$ we  choose a $({\vec \varepsilon}_\zeta, \dom(f^{q'_\rightarrow}))$-fat
tree $T_\zeta'$ and a function $R'_\zeta$ so that
$T'_\zeta \restriction \dom(f^q) \subseteq T_\zeta$ and
$\vec{R'_\zeta}(\vec \mu) \le^* \vec {R_\zeta}(\vec \mu \restriction \dom(f^q))$
 for all maximal $\vec \mu \in T'_\zeta$.

 For all $\zeta < \rho$ we have
$mc_{{\vec \varepsilon}_\zeta}(\dom(f^{q'_\rightarrow})) \in j_{{\vec \varepsilon}_\zeta}(T_\zeta')$,
 and by the choice of $\delta$ and Lemma \ref{mcvaluelemma}   
 \[
 j_{{\vec \varepsilon}_\zeta}(h_\zeta)(mc_{{\vec\varepsilon}_\zeta}(\dom(f^{q'}))) <
 mc_{{\vec \varepsilon}_\zeta}(\dom(f^{q'_\rightarrow}))_{n+1}(j_{{\vec \varepsilon}_\zeta}(\bar\delta)).
 \]

Using the connection between fat trees and iterations, and elementarity,
we choose fat subtrees $T''_\zeta \subseteq T'_\zeta$ such that for all maximal $\vec \nu \in T''_\zeta$,
$\vec{R'_\zeta}(\vec \nu)^\frown \langle q'_{{\vec \nu}\rightarrow} \rangle$ forces 
$\tau_\zeta  < g^*_\delta(\zeta)$, where the key point is that 
$\vec{R'_\zeta}(\vec \nu)^\frown \langle q'_{{\vec \nu}\rightarrow} \rangle$
decides the value of $g^*_\delta(\zeta)$   as $\nu_{n+1}(\bar\delta)$. 

  Now we make $\rho$ many applications of Lemma \ref{410} to get $r \le^{**} q'$ such that for every
  $\zeta < \rho$, the set of conditions of form ${\vec R'}_\zeta({\vec \nu})^\frown \langle q'_{\vec \nu \rightarrow} \rangle$
  with $\vec \nu \in T''_\zeta$ maximal is predense below $r$. Then $r$ forces that $\tau_\zeta < g^*_{\delta}(\zeta)$
  for all $\zeta$, as required to finish the proof of Lemma \ref{scale-analysis}.
\end{proof}

\section{The main theorem} \label{mainsectionthm}

\begin{theorem} \label{mainthm} Let $\rho < \kappa < \lambda$ where $\rho$ is regular and uncountable,
  $\lambda$ is the least inaccessible limit of measurable cardinals greater than $\kappa$,
  and there is a Mitchell increasing sequence $\langle E_i : i < \rho \rangle$ such that each
  extender $E_i$ witnesses that $\kappa$ is $\lambda$-strong and
  is such that ${}^\kappa \Ult(V, E_i) \subseteq \Ult(V, E_i)$.
  Then there is a cardinal-preserving generic extension
  in which $\cf(\kappa) = \rho$, $2^\kappa = \lambda$, and $\Spch(\kappa)$ is unbounded in $\lambda$.
\end{theorem}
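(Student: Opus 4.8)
The plan is to pass to the generic extension $V[G]$ by the poset $\mathbb{P}$ of Section~\ref{EBF}, built from $\rho$, $\kappa$, $\lambda$ and $\vec E$. The first two conclusions are then immediate from the basic properties of $\mathbb{P}$ collected after its definition: $\mathbb{P}$ preserves all cardinals, in $V[G]$ the cardinal $\kappa$ is strong limit of cofinality $\rho$ (so in particular $2^\rho<\kappa$), and $2^\kappa=\lambda$. For the third conclusion it is enough to prove the stronger assertion that $\gamma^+\in\Spch(\kappa)^{V[G]}$ for every $\gamma$ which is measurable in $V$ with $\kappa<\gamma<\lambda$: since $\lambda$ is an inaccessible limit of measurable cardinals, the set of such $\gamma^+$ is cofinal in $\lambda$.

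Fixing such a $\gamma$, I would obtain $\gamma^+\in\Spch(\kappa)$ by feeding suitable data into Lemma~\ref{scalegeneration}. Set $\mu_i=g^*_\gamma(i)$ for $i<\rho$. By the listed properties of $\mathbb{P}$, for all large $i$ the ordinal $\mu_i$ is measurable in $V[G]$ and GCH holds at $\mu_i$; moreover $g^*_\gamma$ is eventually strictly increasing (since $g_\kappa(i)<g_\gamma(i)<g_\kappa(i+1)$ for large $i$) and, as the indices $\omega^i+1$ are cofinal in $\rho$, it is cofinal in $\kappa$. Thus, on a tail (which is all that matters), $\langle\mu_i:i<\rho\rangle$ is increasing and cofinal in $\kappa$, and since $2^{\mu_i}=\mu_i^+$ every normal measure $U_i$ on $\mu_i$ is generated by an almost decreasing sequence of regular length $\theta_i:=\mu_i^+$. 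For the required scale in $\prod_{i<\rho}\mu_i$, note $\cf(\gamma)=\gamma>\kappa$, so Lemma~\ref{scale-analysis} makes $g^*_\gamma$ an exact upper bound of the strictly $<^*$-increasing sequence $\langle g^*_\delta:\kappa\le\delta<\gamma\rangle$ of Lemma~\ref{strictincreasing}; after the harmless step of lowering each $g^*_\delta$ on the bounded set of coordinates where it fails to lie below $g^*_\gamma$, this is a scale of length $\sigma:=\gamma$ in $\prod_{i<\rho}\mu_i$, with $\sigma$ regular and $\kappa<\sigma$.

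The one genuinely new point is to produce a scale in $\prod_{i<\rho}\theta_i=\prod_{i<\rho}\mu_i^+$ of regular length $\tau\ge\sigma$, and the plan is to show $\tau=\gamma^+$ works. The crux is the identity $g^*_{\gamma^+}(i)=\mu_i^+$ for all large $i$: the ordinal $g_{\gamma^+}(\omega^i+1)$ is a regular cardinal of $V[G]$ lying strictly above $\mu_i$ and, for large $i$, inside the same interval $[h(g_\kappa(j)),g_\kappa(j+\omega))$ as $\mu_i$, an interval on which GCH holds in $V[G]$; what must be checked is that no $V[G]$-cardinal lies strictly between $\mu_i$ and $g_{\gamma^+}(\omega^i+1)$. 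Granting this, Lemma~\ref{scale-analysis} applied at $\gamma^+$ (whose cofinality is $\gamma^+>\kappa$) shows $g^*_{\gamma^+}=\langle\mu_i^+:i<\rho\rangle$ is an exact upper bound of the strictly $<^*$-increasing sequence $\langle g^*_\delta:\kappa\le\delta<\gamma^+\rangle$, which is therefore (after the same modification) a scale of length $\tau=\gamma^+$ in $\prod_{i<\rho}\mu_i^+$. Now Lemma~\ref{scalegeneration}, applied with $\sigma=\gamma<\gamma^+=\tau$ and the $\mu_i,\theta_i,U_i$ above, produces a uniform ultrafilter $U$ on $\kappa$ with $\Ch(U)=\gamma^+$; letting $\gamma$ vary over the $V$-measurable cardinals in $(\kappa,\lambda)$ finishes the proof.

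I expect the identity $g^*_{\gamma^+}(i)=\mu_i^+$ to be the main obstacle. The coarse cardinal arithmetic of $V[G]$ — cardinal preservation, the location of the GCH intervals, the equality $2^{g_\kappa(i)}=h(g_\kappa(i))$ for limit $i$ — is already recorded in Section~\ref{EBF}, but pinning down the exact successor relationship between the generic sequences attached to the coordinates $\gamma$ and $\gamma^+$ means returning to the definition of $\mathbb{P}$ and the extender ultrapowers, essentially exploiting that each $j_{E_\xi}$ sends the $V$-successor of $\gamma$ to the successor of $j_{E_\xi}(\gamma)$ (which is where GCH in $V$ and the $\lambda$-strongness of the $E_\xi$ enter). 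A smaller but real point is the verification that an exact upper bound of a $<^*$-increasing sequence of regular length genuinely yields a scale in the product of its values.
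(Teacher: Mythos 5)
Your proposal is correct and follows essentially the same route as the paper: fix a $V$-measurable $\gamma\in(\kappa,\lambda)$, use Lemmas \ref{strictincreasing} and \ref{scale-analysis} at $\gamma$ and at $\gamma^{+}$ to obtain scales of lengths $\gamma$ and $\gamma^{+}$ in $\prod_{i<\rho}g^*_\gamma(i)$ and $\prod_{i<\rho}g^*_{\gamma^+}(i)$, and feed these, together with measurability and GCH at the points $g^*_\gamma(i)$, into Lemma \ref{scalegeneration} to place $\gamma^{+}$ in $\Spch(\kappa)$. The identity you flag as the main obstacle, $g^*_{\gamma^+}(i)=(g^*_\gamma(i))^{+}$ for all large $i$, is exactly what the paper asserts (with $\mu=\gamma$, $\theta=\mu^{+}$ and $\theta_i=g^*_\theta(i)$) as item (1) of its proof, treating it as one of the routine consequences of the forcing of Section \ref{EBF} along the lines of the reflection argument you sketch.
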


\begin{proof}  In $V$ let $\mu \in (\kappa, \lambda)$ be measurable in $V$, and let $\theta = 2^\mu = \mu^+$.
  In the generic extension for each $i<\rho$ let $\mu_i = g^*_\mu(i)$ and $\theta_i=g^*_\theta(i)$.
  For all large $i$ we have that in the extension:
  \begin{enumerate}
  \item  $\mu_i$ is measurable and $2^{\mu_i} = \theta_i = \mu_i^+$.
  \item  There is a normal measure $U_i$ on $\mu_i$ generated by an almost decreasing sequence of length
    $\theta_i$.
  \item There exist a cofinal sequence in $\prod_{i < \rho} \mu_i$ under eventual domination of length $\mu$,
    and a cofinal sequence in $\prod_{i < \rho} \theta_i$ under eventual domination of length $\theta$.
  \end{enumerate}

  Appealing to Lemma \ref{scalegeneration}, in the extension there is a uniform ultrafilter
  $U$ on $\kappa$ with $Ch(U) = \theta$.
\end{proof}

\begin{theorem} \label{mainthm2} From the same hypotheses as Theorem \ref{mainthm}, 
  it is consistent that 
  $2^\kappa$ is the least weakly inaccessible cardinal greater than $\kappa$, and
  every regular cardinal between $\kappa$ and $\lambda$ is in the spectrum.
\end{theorem}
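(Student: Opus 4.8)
The plan is to run the proof of Theorem \ref{mainthm} more carefully, extracting a uniform ultrafilter of character $\mu^+$ for \emph{every} regular $\mu \in (\kappa, \lambda)$ simultaneously, and to arrange in advance that the continuum function on $[\kappa, \lambda)$ looks the way we want. The point is that Lemma \ref{scalegeneration} only needs scales of length $\mu$ in $\prod_i \mu_i$ and of length $\mu^+$ in $\prod_i \theta_i$, where $\mu_i = g^*_\mu(i)$ and $\theta_i = g^*_{\mu^+}(i)$; Lemmas \ref{strictincreasing} and \ref{scale-analysis} already give us that $g^*_\mu$ is an exact upper bound for $\langle g^*_\delta : \kappa \le \delta < \mu\rangle$ whenever $\cf(\mu) > \kappa$, hence a scale of length $\mu$ exists below $\mu$ as soon as $\mu$ is regular, and likewise $g^*_{\mu^+}$ furnishes a scale of length $\mu^+$ in $\prod_i g^*_{\mu^+}(i)$, provided we know the relevant $g^*_{\mu^+}(i)$ is $(g^*_\mu(i))^+$ in the extension. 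So the heart of the matter is purely arithmetical: make $2^\kappa$ the least weakly inaccessible above $\kappa$, and make $2^{\mu_i} = \mu_i^+$ for cofinally many $i$ and for every measurable $\mu$.

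First I would choose, in $V$, the ground model carefully: arrange GCH above $\kappa$ on a tail, and let $\lambda$ be the least inaccessible limit of measurable cardinals above $\kappa$ as in Section \ref{EBF}. Since $\lambda$ is inaccessible and every regular cardinal in $(\kappa,\lambda)$ is below some measurable $\mu < \lambda$ (because the measurables are cofinal in $\lambda$), it suffices to find, for each measurable $\mu \in (\kappa,\lambda)$, a uniform ultrafilter on $\kappa$ of character $\mu^+$ — this takes care of all regular cardinals in $(\kappa,\lambda)$ in one sweep via the observation that $\{\mu^+ : \mu \text{ measurable}, \kappa < \mu < \lambda\}$ together with $\{\text{successors of successors, etc.}\}$ exhausts the regulars? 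No — that is false. The correct move: for a \emph{regular} $\chi$ with $\kappa < \chi < \lambda$, pick a measurable $\mu$ with $\chi \le \mu < \lambda$ and GCH at $\mu$, take $\theta = \mu^+$ as before, but now insist the scale in $\prod_i \theta_i$ has length $\chi$ rather than $\mu^+$. For this I need a scale of length $\chi$ in $\prod_i \theta_i$: since $\theta_i$ has cofinality $> \kappa$ (from the third bullet at the end of Section \ref{EBF}) and $g^*_\chi$ is an exact upper bound for $\langle g^*_\delta : \delta < \chi\rangle$ by Lemma \ref{scale-analysis}, while $g^*_\chi(i) \le \theta_i = g^*_{\mu^+}(i)$ pointwise for cofinally many $i$ (as $\chi \le \mu^+$), the functions $g^*_\delta$ for $\delta<\chi$, capped below $\theta_i$, form a scale of length $\chi$ in $\prod_i \theta_i$. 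That is the extra ingredient beyond Theorem \ref{mainthm}.

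So the steps are: (1) fix regular $\chi \in (\kappa,\lambda)$; (2) choose measurable $\mu$ with $\chi \le \mu < \lambda$, GCH at $\mu$, and $\theta := \mu^+$, so in the extension $\mu_i := g^*_\mu(i)$ is measurable with a generating sequence of length $\theta_i := g^*_\theta(i) = \mu_i^+$ for all large $i$, by the bullets of Section \ref{EBF}; (3) note $2^\rho \le \kappa$ holds since $\kappa$ is strong limit of cofinality $\rho$ in the extension; (4) by Lemma \ref{strictincreasing} and Lemma \ref{scale-analysis}, $\langle g^*_\mu(i) : \kappa \le \mu' < \mu\rangle$ has $g^*_\mu$ as an exact upper bound, giving a scale of length $\mu$ in $\prod_i \mu_i$, and similarly a scale of length $\chi$ in $\prod_i \theta_i$ by capping the functions $g^*_\delta$ ($\delta < \chi$) below $\theta_i$; (5) apply Lemma \ref{scalegeneration} with $\sigma = \mu$ (note $\kappa < \mu$) and $\tau = \chi$ — but wait, Lemma \ref{scalegeneration} requires $\sigma \le \tau$, i.e.\ $\mu \le \chi$, which fails since $\chi \le \mu$. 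This is the obstacle, and the fix is to instead take $\mu$ with $\mu < \chi$: choose a measurable $\mu$ with $\kappa < \mu < \chi$ (possible since measurables are dense below $\lambda$ — actually not necessarily below $\chi$!). The genuine resolution is to take $\mu$ the \emph{largest} measurable below $\chi$ if $\chi$ is a limit of measurables, and otherwise observe $\chi = \mu^+$ or $\chi$ is a double successor of a measurable etc., and in each case Lemma \ref{scalegeneration} with $\sigma = \mu$, $\tau = \chi$ works because $\mu < \chi$ gives $\sigma < \tau$. Finally, arrange $2^\kappa = \lambda$ is the least weakly inaccessible above $\kappa$: this follows because in the extension $\lambda = 2^\kappa$ is inaccessible (preserved by the cardinal-preserving forcing $\mathbb P$, which keeps $\lambda$ regular and a strong limit on a tail via the GCH-on-intervals bullet), and $\lambda$ was chosen least such in $V$.

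\medskip

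The step I expect to be the real obstacle is reconciling the hypothesis $\sigma \le \tau$ of Lemma \ref{scalegeneration} with the desired character $\tau = \chi$: one must choose the auxiliary measurable $\mu$ \emph{below} $\chi$ rather than above it, and then verify both that such a $\mu$ exists for every regular $\chi \in (\kappa,\lambda)$ (using that $\lambda$ is a limit of measurables, so the supremum of the measurables below $\chi$ is either $\chi$ itself or a measurable $<\chi$, and in the former case one instead uses $\chi$ regular limit of measurables directly via a cofinal sequence of measurables) and that GCH holds at the chosen $\mu$ and at $\mu^+$ in the extension (using the ``GCH on intervals'' bullet, which forces $2^{g_\mu(i+1)} = g_\mu(i+1)^+$ for all large $i$ when $\mu$ is measurable). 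Once the bookkeeping on which measurable to use is settled, the rest is a direct quotation of Lemma \ref{scalegeneration} together with the scale facts of Section \ref{scales}, exactly as in the proof of Theorem \ref{mainthm}.
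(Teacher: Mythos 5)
Your plan has two genuine gaps, and the central one is the scale claim in step (4). In the model of Theorem \ref{mainthm}, the product $\prod_{i<\rho} \theta_i$ with $\theta_i = g^*_{\mu^+}(i)$ has true cofinality $\mu^+$: by Lemmas \ref{strictincreasing} and \ref{scale-analysis}, $g^*_{\mu^+}$ is an exact upper bound of the strictly $<^*$-increasing sequence $\langle g^*_\delta : \kappa \le \delta < \mu^+ \rangle$, which is therefore $<^*$-cofinal in $\prod_i \theta_i$. Your proposed family $\{ g^*_\delta : \delta < \chi \}$ (capped below $\theta_i$) is \emph{not} cofinal there --- every one of its members is eventually dominated by $g^*_\delta$ for any $\delta$ with $\chi \le \delta < \mu^+$ --- so it is not a scale, and since the true cofinality of a reduced product is unique, no scale of regular length $\chi < \mu^+$ in $\prod_i \theta_i$ can exist at all. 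Choosing the auxiliary measurable $\mu$ below $\chi$ does not help: with $\theta_i = \mu_i^+$ the scale length is again forced to be $\mu^+$, not $\chi$. So Lemma \ref{scalegeneration} can only deliver characters of the form $\mu^+$ for $V$-measurable $\mu \in (\kappa,\lambda)$; it cannot, in the Theorem \ref{mainthm} model itself, put $\kappa^+$, double successors, or successors of singular limits of measurables into the spectrum.

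The second gap is the arithmetic half of the statement. In the Theorem \ref{mainthm} extension the forcing preserves cardinals and cofinalities above $\kappa$, so every $V$-measurable $\mu \in (\kappa,\lambda)$ remains a regular limit cardinal, i.e.\ weakly inaccessible; hence $2^\kappa = \lambda$ is far from the least weakly inaccessible above $\kappa$, and minimality of $\lambda$ in $V$ (which concerns strong inaccessibility) does not repair this. Both problems are solved in the paper by a further forcing, which your proposal omits: one enumerates the measurables of $(\kappa,\lambda)$ as $\langle \mu_\eta : \eta < \lambda\rangle$, sets $\chi_0 = \kappa^+$, $\chi_{\eta+1} = \mu_\eta^{++}$, $\chi_\zeta = (\sup_{\eta<\zeta}\mu_\eta)^+$ at limits, and forces with the Easton-support product of the collapses $Coll(\chi_\zeta, \mu_\zeta^+)$. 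This makes the regular cardinals of $(\kappa,\lambda)$ exactly the $\chi_\zeta$ (all successors in the new model, with all limit cardinals in the interval singular, so $\lambda$ becomes the least weakly inaccessible above $\kappa$), and the Garti--Magidor--Shelah argument shows each ultrafilter of character $\mu_\zeta^+$ from Theorem \ref{mainthm} now has character $\chi_\zeta$, so every surviving regular cardinal is in the spectrum. Your instinct that ``the heart of the matter is purely arithmetical'' is right, but the arithmetic must be rearranged by collapsing after the main forcing, not by manufacturing scales of arbitrary regular length, which the PCF structure of the extension rules out.
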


\begin{proof} 
  We will force over the model from the proof of Theorem \ref{mainthm} with a suitable
  product of collapsing posets. 
  We enumerate the measurable cardinals in the interval $(\kappa, \lambda)$ as
  $\langle \mu_\eta : \eta < \lambda \rangle$.   
  For every limit $\zeta < \lambda$,  $\sup_{\eta < \zeta} \mu_\eta$
 is singular by the minimality of $\lambda$,
  in particular it is less than $\mu_\zeta$. Now we choose an increasing sequence of regular cardinals $\langle \chi_\eta : \eta < \lambda \rangle$ in
  the interval $(\kappa, \lambda)$ as follows:
  $\chi_0 = \kappa^+$, $\chi_{\eta+1} = \mu_\eta^{++}$ for $\eta < \lambda$, and 
  $\chi_\zeta = (\sup_{\eta < \zeta} \mu_\eta)^+$ when $\zeta$ is a  limit ordinal.
  
  We force with the Easton support product of the Levy collapses $Coll(\chi_\zeta, \mu_\zeta^+)$
 for $\zeta < \lambda$.
  By a routine calculation the surviving cardinals in the interval $(\kappa, \lambda)$ are those
  of the form $\chi_\zeta$ and their limits. All the limits are singular so the regular cardinals in
  $(\kappa, \lambda)$ are those of the form $\chi_\zeta$. Now we argue exactly as
  in \cite[Claim 8 and Theorem 9]{GartiMagidorShelah} that
 $\chi_\zeta$ is in the spectrum for all $\zeta$. 
\end{proof}

\end{document}